\theoremstyle{remark}{
\newtheorem{Def}{{\rm Definition}}
\newtheorem{Ex}{{\rm Example}}
\newtheorem{Rem}{{\rm Remark}}
\newtheorem{Prob}{{\rm Problem}}

}
\theoremstyle{plain}
{

\newtheorem{Prop}{Proposition}
\newtheorem{Thm}{Theorem}
\newtheorem{MainThm}{Main Theorem}

}
\begin{document}
\title[Reconstructing real algebraic maps locally like moment-maps]{Reconstructing real algebraic maps locally like moment-maps with prescribed images and compositions with the canonical projections to the $1$-dimensional real affine space}
\author{Naoki kitazawa}
\keywords{(Non-singular) real algebraic manifolds and real algebraic maps. Smooth maps. Morse-Bott functions. Moment maps. Graphs. Reeb graphs. \\
\indent {\it \textup{2020} Mathematics Subject Classification}: Primary~14P05, 14P25, 57R45, 58C05. Secondary~57R19.}

\address{Institute of Mathematics for Industry, Kyushu University, 744 Motooka, Nishi-ku Fukuoka 819-0395, Japan\\
 TEL (Office): +81-92-802-4402 \\
 FAX (Office): +81-92-802-4405 \\
}
\email{n-kitazawa@imi.kyushu-u.ac.jp, naokikitazawa.formath@gmail.com}
\urladdr{https://naokikitazawa.github.io/NaokiKitazawa.html}
\maketitle
\begin{abstract}
We present new real algebraic maps of non-positive codimensions with prescribed images whose boundaries consist of explicit non-singular real algebraic hypersurfaces satisfying so-called "transversality". Explicit information on important real polynomials is also given. Their preimages are one-point sets or products of spheres. They are locally like so-called {\it moment} maps. 

Celebrated theory of Nash and Tognoli says that smooth closed manifolds are {\it non-singular} real algebraic manifolds and the zero sets of some real polynomial maps. In general, we can approximate smooth functions or more generally, maps, by real algebraic ones. It is in general difficult to have explicit examples. We have constructed maps of a specific class of the present class containing the canonical projections of the unit spheres previously where preimages are one-point sets or spheres.  

We also present explicit families of functions represented as compositions of such maps with the canonical projections.

\end{abstract}
\section{Introduction.}
\label{sec:1}

Real algebraic geometry has a long nice history and studies ({\it non-singular}) real algebraic manifolds or more generally, varieties. For related topics, see \cite{bochnakcosteroy, bochnakkucharz, kollar, kucharz, nash, tognoli} for example. Here, we concentrate on non-singular real algebraic manifolds.

Some of Nash and Tognoli's celebrated theory shows that smooth closed manifolds are non-singular real algebraic manifolds and the zero sets of some real polynomial maps. In considerable cases, we can approximate smooth functions on {\it non-singular} real algebraic manifolds or more generally, maps between non-singular real algebraic manifolds, by real algebraic ones. Here, as another problem, we study construction of explicit real algebraic maps of non-positive codimensions and their explicit global information. Some explicit examples of such functions and maps are well-known. The canonical projections of spheres embedded naturally in Euclidean spaces are simplest ones. Some natural functions on so-called projective spaces, Lie groups and their quotient spaces are also well-known. They are represented as real polynomials. 
Knowing explicit global information of the maps and the manifolds such as topological properties is difficult in general.
See \cite{maciasvirgospereirasaez, ramanujam, takeuchi} for example.

We give some new answers to this problem explicitly. 
We construct real algebraic maps whose codimensions are non-positive with prescribed images and preimages. The boundaries of the images consist of non-singular real algebraic hypersurfaces intersecting with the condition on "transversality". In addition, preimages are products of spheres. We can also know real polynomials for the desired zero sets and the manifolds explicitly. Such maps also generalize the canonical projections of spheres and locally like so-called {\it moment maps}.
Our method applies some singularity theory of smooth maps, some theory on differential topology of manifolds, and some elementary real algebraic arguments and is interdisciplinary. Our study also adds to our related pioneering studies \cite{kitazawa3, kitazawa7}. Studies on domains formed by real algebraic curves \cite{bodinpopescupampusorea, kohnpieneranestadrydellshapirosinnsoreatelen, 
	sorea1, sorea2} also motivate us.
We also investigate functions represented as compositions of our new maps with the canonical projections. They are shown to be {\it Morse-Bott}. We mainly study global topological descriptions by the {\it Reeb graphs}, the natural spaces of connected components of preimages.
\subsection{Smooth manifolds and maps.}
Let $X$ be a topological space having the structure of some cell complex whose dimension is finite. We can define the dimension $\dim X$ uniquely. The dimension $\dim X$ is an integer of course. 
A topological manifold is well-known to have the structure of a CW complex. A smooth manifold is well-known to have the structure of a polyhedron. We can define the structure of a certain polyhedron for a smooth manifold canonically. This is a so-called PL manifold. It is also well-known that a topological space having the structure of a polyhedron whose dimension is at most $2$ has the structure of a polyhedron uniquely. For a topological manifold of dimension at most $3$, the same fact holds. This respects celebrated and well-known theory by \cite{moise} for example. Let ${\rm Int}\ X$ denote the interior of  a manifold $X$ and $\partial X$ the boundary $X-{\rm Int}\ X$ of $X$.
  
Let ${\mathbb{R}}^k$ denote the $k$-dimensional Euclidean space. It is a simplest $k$-dimensional smooth manifold and the Riemannian manifold with the standard Euclidean metric. 
Let $\mathbb{R}:={\mathbb{R}}^1$ and $\mathbb{N} \subset \mathbb{Z} \subset \mathbb{R}$ be the set of all positive integers and that of all integers respectively.
For each point $x \in {\mathbb{R}}^k$, we can define $||x|| \geq 0$ as the distance between $x$ and the origin $0$ under this metric.
This is also naturally a simplest real algebraic manifold: the $k$-dimensional real affine space. Let $S^k:=\{x \in {\mathbb{R}}^{k+1} \mid ||x||=1\}$ denote the $k$-dimensional unit sphere, which is a $k$-dimensional smooth compact submanifold of ${\mathbb{R}}^{k+1}$ and has no boundary. It is connected for any positive integer $k \geq 1$. It is a discrete two-point set for $k=0$. It is the zero set of the real polynomial $||x||^2-1={\Sigma}_{j=1}^{k+1} {x_j}^2-1$ with  $x:=(x_1,\cdots,x_{k+1})$ and a real algebraic (sub)manifold (hypersurface). 
Let $D^k:=\{x \in {\mathbb{R}}^{k} \mid ||x|| \leq 1\}$ denote the $k$-dimensional unit disk. It is a $k$-dimensional smooth compact and connected submanifold of ${\mathbb{R}}^{k}$ for any positive integer $k \geq 1$. Of course we have $\partial D^k=S^{k-1}$.

For a smooth manifold $X$, let $T_xX$ denote the tangent vector space at $x \in X$. Let $c:X \rightarrow Y$ be a differentiable map from a differentiable manifold $X$ into another one $Y$. Let ${dc}_x:T_x X \rightarrow T_{c(x)}Y$ denote the differential of $c$ at $x \in X$: this is a linear map between the tangent vector spaces. If the rank of the differential ${dc}_x$ is smaller than the minimum of the (multi)set $\{\dim X, \dim Y\}$, then $x$ is a {\it singular point} of $c$.  For a {\it singular} point $x \in X$ of $c$, the value $c(x)$ is a {\it singular value} of $c$. Let $S(c)$ denote the set of all singular points of $c$. 

In our paper, we consider smooth maps, defined as maps of the class $C^{\infty}$, as differentiable maps, unless otherwise stated. A canonical projection of the Euclidean space ${\mathbb{R}}^k$ is denoted
by ${\pi}_{k,k_1}:{\mathbb{R}}^{k} \rightarrow {\mathbb{R}}^{k_1}$. This is defined as the map mapping 
each point $x=(x_1,x_2) \in {\mathbb{R}}^{k_1} \times {\mathbb{R}}^{k_2}={\mathbb{R}}^k$ to the first component $x_1 \in {\mathbb{R}}^{k_1}$ with the conditions on the dimensions given by $k_1, k_2>0$ and $k=k_1+k_2$. A canonical projection of the unit sphere $S^{k-1}$ is the restriction of it.

Here, a real algebraic manifold is represented as a union of connected components of the intersection of finitely many zero sets of real polynomials. {\it Non-singular} real algebraic manifolds are defined naturally by the implicit function theorem for the polynomials or the maps defined canonically from the polynomials. The real affine space and the unit sphere are simplest non-singular ones. {\it Real algebraic} maps here are represented as the compositions of the canonical embeddings into the real affine spaces with canonical projections.

\subsection{Our main result.}


Hereafter, let ${\mathbb{N}}_a$ be the set of all elements of $\mathbb{N}$ smaller than or equal to a given real number $a$. For example, for a positive integer $i$, ${\mathbb{N}}_i:=\{1,\cdots,i\}$ and the set of all integers from $1$ to $i$.

\begin{MainThm}
	\label{mthm:1}
	Let $l_1$, $l_2$ and $n$ be positive integers.
	Let $\{S_j\}_{j=1}^{l_1}$ be a family of non-singular
	real algebraic hypersurfaces of ${\mathbb{R}}^n$ each $S_j$ of which is connected and the zero set of a real polynomial $f_j$.
Let $D \neq \emptyset$ be a connected open subset of ${\mathbb{R}}^n$ whose closure $\overline{D}$ is compact and connected. Suppose also the following.
	\begin{enumerate}
		\item \label{mthm:1.1} 
For any sufficiently small open neighborhood $U_D$ of $\overline{D}$, it holds that $D=U_D \bigcap {\bigcap}_{j=1}^{l_1} \{x \mid f_j(x)>0\}$. 

\item \label{mthm:1.2} For each increasing sequence $\{j_i\}_{i=1}^{l_3} \subset {\mathbb{N}}_{l_1}$ {\rm (}$1 \leq l_3 \leq l_1${\rm )} and each point $p$ in the intersection ${\bigcap}_{i=1}^{l_3} S_{j_i} \bigcap \overline{D}$, the dimension of the intersection ${\bigcap}_{i=1}^{l_3} T_{p} S_{j_i}$ of the tangent vector spaces is $n-l_3$. 
\item \label{mthm:1.3} A surjective map $m_{l_1,l_2}:{\mathbb{N}}_{l_1} \rightarrow {\mathbb{N}}_{l_2}$ enjoying the following properties exists: for any increasing sequence $\{j_i\}_{i=1}^{l_3}$ making the set ${\bigcap}_{i=1}^{l_3} S_{j_i} \bigcap \overline{D}$ non-empty,  the restriction of $m_{l_1,l_2}$ to the set $\{j_i\}_{i=1}^{l_3}$ is always injective.
	\end{enumerate}
Let $m_{l_2}:{\mathbb{N}}_{l_2} \rightarrow \mathbb{N} \sqcup \{0\}$ be a map. Let $m:=n+{\Sigma}_{j=1}^{l_2} m_{l_2}(j)$. Then there exist an $m$-dimensional non-singular real algebraic closed manifold $M$ and a smooth real algebraic map $f:M \rightarrow {\mathbb{R}}^n$ with the image $f(M)$ being the closure $\overline{D}$, the set $f^{-1}(p)$ {\rm (}$p \in \overline{D}${\rm )} being a one-point set or diffeomorphic to a manifold of the form ${\prod}_{j=1}^{l_k} S^{k_j}$ and the property 
$\dim f^{-1}(p) \leq m-n$  {\rm (}$p \in \overline{D}${\rm )}. In the case the function $m_{l_2}$ is always positive for example, our manifold $M$ can be obtained as a connected one. 
	\end{MainThm}

This is an extension or a variant of main results of the author, presented in \cite{kitazawa3, kitazawa7}. There the hypersurfaces do not intersect. See also \cite{kitazawa4, kitazawa9}. Additional Main Theorems, presented in the third section, are on the functions represented as the compositions of the maps with the canonical projections and their global structures. \cite{kitazawa3} also studies this kind of problems. More precisely, these new theorems show explicit functions  represented as the compositions of maps into ${\mathbb{R}}^2$ of Main Theorem \ref{mthm:1} with each $S_j$ being a circle with the canonical projection ${\pi}_{2,1}$.

We add exposition on Main Theorem \ref{mthm:1} (\ref{mthm:1.2}). This is on "transversality". From the condition on the tangent vector spaces, for any open set $U_D$ as in (\ref{mthm:1.1}), the subset $U_D \bigcap {\bigcap}_{i=1}^{l_3} S_{j_i}$ is an {\rm (}$n-l_3${\rm )}-dimensional smooth regular submanifold of $U_D \subset {\mathbb{R}}^n$ with no boundary and we have ${\bigcap}_{i=1}^{l_3} T_{p} S_{j_i}=T_p (U_D \bigcap {\bigcap}_{i=1}^{l_3} S_{j_i})$  {\rm (}if the subset is not empty{\rm )}. This subset is also a closed subset of $U_D \subset {\mathbb{R}}^n$. See \cite{golubitskyguillemin} for transversality.

The next section is on Main Theorem \ref{mthm:1}.
Main Theorems \ref{mthm:2} and \ref{mthm:3} are presented as an application in the third section. The fourth section presents additional remarks. \\
\ \\
\noindent {\bf Conflict of interest.} \\
The author was a member of the project JSPS Grant Number JP17H06128.
The author was a member of the project JSPS KAKENHI Grant Number JP22K18267. Principal Investigator for them is all Osamu Saeki.  The author works at Institute of Mathematics for Industry (https://www.jgmi.kyushu-u.ac.jp/en/about/young-mentors/) and this is closely related to our study. Our study thanks them for their supports. The author is also a researcher at Osaka Central
Advanced Mathematical Institute (OCAMI researcher), supported by MEXT Promotion of Distinctive Joint Research Center Program JPMXP0723833165. He is not employed there. This is for our studies
and our study also thanks this. \\
\ \\
{\bf Data availability.} \\
Data essentially supporting our present study are all in the
 paper. We also note that the present version is a revised version of the previous version \cite{kitazawa8}. We have revised based on our consideration. This does not affect our main ingredients. We have added some observations and exposition. We also have changed the title.   
\section{On Main Theorem \ref{mthm:1}.}
\subsection{Our proof of Main Theorem \ref{mthm:1}.}

Hereafter, we use "$\prod$" for the products of numbers, functions or sets. We use the notation in the forms ${\prod}_{j=1}^l$ for a positive integer $l$ and ${\prod}_{x \in X}$ for a (finite) set $X$ for example. For coordinates and points for example, we use the notation like $x=(x_1,\cdots,x_l)$ with a positive integer $l$ for example.

\begin{Thm}\label{thm:1}
In Main Theorem \ref{mthm:1}, we can also construct a suitable map $f:M \rightarrow {\mathbb{R}}^n$ on a suitable manifold $M$ in such a way that each preimage $f^{-1}(p)$ is as follows.
\begin{enumerate}
\setcounter{enumi}{3}
	\item For $p \in D$, it is diffeomorphic to ${\prod}_{j=1}^{l_2} S^{m_{l_2}(j)}$.
	\item Let $\{j_i\}_{i=1}^{l_3} \subset {\mathbb{N}}_l$ be an increasing sequence of integers {\rm (}$1 \leq l_3 \leq l_1${\rm )}. Let $p$ be a point of the set ${\bigcap}_{i=1}^{l_3} S_{j_i} \bigcap \overline{D}$ such that for any increasing sequence $\{{j^{\prime}}_i\}_{i=1}^{l_3+1}$ containing the increasing sequence $\{j_i\}_{i=1}^{l_3}$ as a subsequence, $p \notin {\bigcap}_{i=1}^{l_3+1} S_{{j^{\prime}}_i}$ holds. Then it is diffeomorphic to ${\prod}_{j \in {\mathbb{N}}_{l_2}-\{m_{l_1,l_2}(j_i)\}_{i=1}^{l_3}} S^{m_{l_2}(j)}$ if ${\mathbb{N}}_{l_2}-\{m_{l_1,l_2}(j_i)\}_{i=1}^{l_3}$ is not empty and the one-point set if ${\mathbb{N}}_{l_2}-\{m_{l_1,l_2}(j_i)\}_{i=1}^{l_3}$ is empty.
\end{enumerate}
\end{Thm}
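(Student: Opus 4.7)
The plan is to construct $M$ explicitly inside the real affine space ${\mathbb{R}}^n \times {\prod}_{i=1}^{l_2} {\mathbb{R}}^{m_{l_2}(i)+1}$ as a union of connected components of the common zero set of the polynomials
\[
F_i(x, y_1, \ldots, y_{l_2}) := ||y_i||^2 - g_i(x), \quad g_i(x) := {\prod}_{j \in m_{l_1,l_2}^{-1}(i)} f_j(x), \quad i \in {\mathbb{N}}_{l_2},
\]
and to take $f$ to be the restriction of the projection onto the ${\mathbb{R}}^n$-factor. A dimension count ($n + {\Sigma}_{i}(m_{l_2}(i)+1)$ minus $l_2$ defining equations) gives $\dim M = m$. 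The fiber of $f$ over $p$ is ${\prod}_{i=1}^{l_2} \{y_i \mid ||y_i||^2 = g_i(p)\}$, whose $i$-th factor is diffeomorphic to $S^{m_{l_2}(i)}$ when $g_i(p)>0$, collapses to a single point when $g_i(p)=0$, and is empty when $g_i(p)<0$. Assertion (4) is then immediate from $g_i > 0$ throughout $D$; for (5), condition (\ref{mthm:1.3}) forces the values $\{m_{l_1,l_2}(j_i)\}_{i=1}^{l_3}$ to be pairwise distinct, while the maximality hypothesis on $\{j_i\}$ keeps every other $g_i$ strictly positive at $p$, so precisely the $l_3$ indicated spherical factors degenerate.

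The main technical step is to verify non-singularity of the ambient variety $M^{\prime} := \{F_1 = \cdots = F_{l_2} = 0\}$ at every point above a sufficiently small open neighborhood $U_D$ of $\overline{D}$. The Jacobian at $(p, y)$ has rows of the form $(-\nabla g_i(p), 0, \ldots, 2y_i, \ldots, 0)$. For $i$ with $g_i(p) > 0$ one has $y_i \neq 0$, and these rows are independent through their $y_i$-blocks. For $i \in I := \{i : g_i(p) = 0\}$ the $y_i$-blocks vanish, so I must exhibit linear independence of the gradients $\nabla g_i(p)$ for $i \in I$. Condition (\ref{mthm:1.3}) supplies a unique $j(i) \in m_{l_1,l_2}^{-1}(i)$ with $f_{j(i)}(p) = 0$ (a second such index would violate injectivity of $m_{l_1,l_2}$ on a subset of ${\mathbb{N}}_{l_1}$ whose hypersurfaces meet $\overline{D}$), while the remaining factors of $g_i$ are strictly positive at $p$; hence $\nabla g_i(p)$ is a positive multiple of $\nabla f_{j(i)}(p)$. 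Since $p \in {\bigcap}_{i \in I} S_{j(i)} \bigcap \overline{D}$, condition (\ref{mthm:1.2}) delivers the linear independence of the $\nabla f_{j(i)}(p)$, completing the rank calculation.

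Finally, I extract $M$ as the union of those connected components of $M^{\prime}$ whose image under $f$ is contained in $\overline{D}$. A local inspection near any point of $\partial D$ shows that $\{g_i \geq 0 \text{ for all } i \in {\mathbb{N}}_{l_2}\} \bigcap U_D$ agrees with $\overline{D}$ for $U_D$ small enough: crossing any boundary hypersurface $S_{j_0}$ flips the sign of $f_{j_0}$, while (\ref{mthm:1.3}) keeps the remaining factors of $g_{m_{l_1,l_2}(j_0)}$ positive, so the fiber becomes empty on the exterior side. Hence $f^{-1}(\overline{D}) \bigcap M^{\prime}$ is both closed and open in $M^{\prime}$, and therefore qualifies as a non-singular real algebraic manifold in the sense used throughout the paper. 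It is compact because $||y_i||^2 \leq \max_{\overline{D}} g_i$ and $\overline{D}$ is compact, and it has no boundary since $M^{\prime}$ has none. Connectedness in the case every $m_{l_2}(i) > 0$ drops out by noting that $f^{-1}(D)$ is a fiber bundle over connected $D$ with connected fiber ${\prod}_i S^{m_{l_2}(i)}$ and that every point of $f^{-1}(\partial D)$ is a limit of points in $f^{-1}(D)$ via explicit lifts $y_i(t) = \sqrt{g_i(p_t)}\,e_1$ along paths $p_t \to p$ inside $D$. The genuine difficulty is the Jacobian rank check above; everything else is organizational bookkeeping atop the algebraic setup.
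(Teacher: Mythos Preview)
Your construction and verification are correct and coincide with the paper's own proof: the same defining polynomials $||y_i||^2-\prod_{j\in m_{l_1,l_2}^{-1}(i)}f_j(x)$, the same projection map, the same Jacobian rank analysis split into the cases $g_i(p)>0$ (handled by the $y_i$-block) and $g_i(p)=0$ (handled via conditions (\ref{mthm:1.2}) and (\ref{mthm:1.3}) exactly as you do), and the same argument that the fiber is empty outside $\overline{D}$ within $U_D$. Your write-up is in fact more compact than the paper's STEP~1-1 through STEP~1-3, but the mathematical content is identical.
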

This is an extension of a main result of the author of \cite{kitazawa3}. 
There hypersurfaces do not intersect.
The result is also presented as a part of Theorem \ref{thm:1} of \cite{kitazawa4}. Some essential steps are same as those of their original proofs. 
\begin{proof}[A proof of Main Theorem \ref{mthm:1} with Theorem \ref{thm:1}]

We define a subset $S_0:=\{(x,y_1,\cdots,y_{l_2}) \in U_D \times {\prod}_{i=1}^{l_2} {\mathbb{R}}^{m_{l_2}(i)+1} \subset {\mathbb{R}}^n \times {\prod}_{i=1}^{l_2} {\mathbb{R}}^{m_{l_2}(i)+1}={\mathbb{R}}^{m+l_2} \mid {\prod}_{j \in {m_{l_1,l_2}}^{-1}(i)} (f_{j}(x_1,\cdots,x_n))-{||y_i||}^2=0, i \in {\mathbb{N}}_{l_2}\} \subset {\mathbb{R}}^{m+l_2}$. Here $y_i:=(y_{i,1},\cdots,y_{i,m_{l_2}(i)+1})$. 
\\
\ \\
STEP 0 The subset $S_0$ is not empty. \\
The connected open set $D$ is assumed to be non-empty.
We remember the condition (\ref{mthm:1.3}) and the map  $m_{l_1,l_2}:{\mathbb{N}}_{l_1} \rightarrow {\mathbb{N}}_{l_2}$.

The set ${m_{l_1,l_2}}^{-1}(i)$ is not empty for any $i \in {\mathbb{N}}_{l_2}$ since $m_{l_1,l_2}$ is assumed to be surjective. 

They imply that the set $S_0$ is not empty. \\

\ \\
STEP 1 A proof of the fact by implicit function theorem that the set $S_0$ is a smooth compact submanifold with no boundary. \\
We calculate the partial derivatives of the real polynomial function defined by the real polynomial ${\prod}_{j \in {m_{l_1,l_2}}^{-1}(i)}  (f_{j}(x_1,\cdots,x_n))-||y_i||^2={\prod}_{j \in {m_{l_1,l_2}}^{-1}(i)}  (f_{j}(x_1,\cdots,x_n))-{\Sigma}_{j^{\prime}=1}^{m_{l_2}(i)+1} {y_{i,j^{\prime}}}^2$ for variables $x_j$ and $y_{i,j^{\prime}}$ ($1 \leq i \leq l_2$). In STEP 1-1 and STEP 1-2, we apply implicit function theorem around each point of $S_0$. After that, in STEP 1-3, we give additional arguments to complete our proof of the fact that the set $S_0$ is a smooth compact submanifold with no boundary. \\
\ \\
STEP 1-1 The case for a point $(x_0,y_0) \in S_0 \bigcap (\overline{D} \times {\mathbb{R}}^{m-n+l_2}) \subset U_D \times {\mathbb{R}}^{m-n+l_2}$ such that for $y_0:=(y_{0,1},\cdots,y_{0,l_2})$, $y_{0,j}$ is not the origin for any $1 \leq j \leq l_2$. \\
\ \\
We have the inequality $f_{j}(x_0)>0$ for any $1 \leq j \leq l_1$. This is thanks to our first condition (\ref{mthm:1.1}). As a result we also have the inequality ${\prod}_{j \in {m_{l_1,l_2}}^{-1}(i)}  f_{j}(x_0)>0$ for each integer $1 \leq i \leq l_2$. 
We have a real number $2y_{i,{j_0}^{\prime}}=2y_{0,i,{j_0}^{\prime}} \neq 0$ as the value of the partial derivative at the point $(x_0,y_0)$ for some variable $y_{i,{j_0}^{\prime}}$: here, $y_{0,i,{j_0}^{\prime}}$ is from $y_{0,i}:=(y_{0,i,1},\cdots,y_{0,i,m_{l_2}(i)+1})$ with $y_0:=(y_{0,1},\cdots,y_{0,l_2})$.
Let us abuse the same notation. We also have $0$ as the value of the partial derivative at the point for any variable $y_{i^{\prime},{j}^{\prime}}$ satisfying $i^{\prime} \neq i$.

The differential of the restriction of the map into ${\mathbb{R}}^{l_2}$ defined canonically from the $l_2>0$ real polynomials ${\prod}_{j \in {m_{l_1,l_2}}^{-1}(i)}  (f_{j}(x))-||y_i||^2$ ($1 \leq i \leq l_2$) at $(x_0,y_0)$ is of rank $l_2$. 
The point is not a singular point of the map. Moreover, around the point $(x_0,y_0)$, the set $S_0$ is represented as the graph of a smooth map as follows. The variable of the target of the map consists of exactly $l_2$ components $y_{i,{j_0}^{\prime}}$ ($1 \leq i \leq l_2$). 
\\
\ \\
STEP 1-2 The case for a point $(x_{\rm O},y_{\rm O}) \in S_0 \bigcap (\overline{D} \times {\mathbb{R}}^{m-n+l_2}) \subset U_D \times {\mathbb{R}}^{m-n+l_2}$ such that for $y_{\rm O}:=(y_{{\rm O},1},\cdots,y_{{\rm O},l_2}) \in {\mathbb{R}}^{m-n+l_2}$, $y_{{\rm O},j}$ is the origin for some $1 \leq j \leq l_2$. \\
\ \\
By the condition (\ref{mthm:1.1}), we can also assume that the fact $x_{\rm O} \in {\bigcap}_{i=1}^{l_3} S_{j_i} \bigcap \overline{D}$ holds for some increasing sequence $\{j_i\}_{i=1}^{l_3}$ and that the fact $x_{\rm O} \notin {\bigcap}_{i=1}^{l_3+1} S_{{j^{\prime}}_i}$ holds for any increasing sequence $\{{j^{\prime}}_i\}_{i=1}^{l_3+1}$ containing the sequence $\{j_i\}_{i=1}^{l_3}$ as a subsequence.

We define a set of all points 
${x_{\rm O}}^{\prime} \in {\bigcap}_{i=1}^{l_3} S_{j_i} $ such that the fact ${x_{\rm O}}^{\prime} \notin {\bigcap}_{i=1}^{l_3+1} S_{{j^{\prime}}_i}$ holds for any increasing sequence $\{{j^{\prime}}_i\}_{i=1}^{l_3+1}$ containing the sequence $\{j_i\}_{i=1}^{l_3}$ as a subsequence.
Let $S_{0,\{j_i\}_{i=1}^{l_3}}$ denote the set. It is a subset of the ($n-l_3$)-dimensional manifold ${\bigcap}_{i=1}^{l_3} S_{j_i}$ of course.

By the condition (\ref{mthm:1.1}),
for each point ${x_{\rm O}}^{\prime} \in S_{0,\{j_i\}_{i=1}^{l_3}} \bigcap \overline{D}$ and a sufficiently small open neighborhood $U_{{x_{\rm O}}^{\prime},{\bigcap}_{i=1}^{l_3} S_{j_i}}$ of it discussed and chosen in the space ${\bigcap}_{i=1}^{l_3} S_{j_i}$, we have the relation $U_{{x_{\rm O}}^{\prime},{\bigcap}_{i=1}^{l_3} S_{j_i}} \subset S_{0,\{j_i\}_{i=1}^{l_3}} \bigcap \overline{D}$. 

The set $S_{0,\{j_i\}_{i=1}^{l_3}}$ and $S_{0,\{j_i\}_{i=1}^{l_3}} \bigcap \overline{D}$
 are open sets in ${\bigcap}_{i=1}^{l_3} S_{j_i}$ and ($n-l_3$)-dimensional manifolds with no boundary. They are also submanifolds of the ($n-l_3$)-dimensional manifold ${\bigcap}_{i=1}^{l_3} S_{j_i}$. Furthermore, the manifold ${\bigcap}_{i=1}^{l_3} S_{j_i}$ is a regular submanifold and a closed subset of ${\mathbb{R}}^n$ and has no boundary.

This argument is essential in Theorem \ref{thm:2}, presented later.


Let $i_y \in \mathbb{N}_{l_2}-m_{l_1,l_2}(\{j_i\}_{i=1}^{l_3})$. We investigate partial derivatives of the function defined by the real polynomial ${\prod}_{j \in {m_{l_1,l_2}}^{-1}(i_y)} (f_{j}(x_1,\cdots,x_n))-||y_{i_y}||^2={\prod}_{j \in {m_{l_1,l_2}}^{-1}(i_y)} (f_{j}(x_1,\cdots,x_n))-{\Sigma}_{j^{\prime}=1}^{m_{l_2}(i_y)+1} {y_{i_y,j^{\prime}}}^2$. More precisely, we investigate the partial derivative for each variable $y_{i_1,j^{\prime}}:=y_{i_y,j^{\prime}}$ at $(x_{\rm O},y_{\rm O})$.
We have a real number $2y_{i_{1},{j_0}^{\prime}}=2y_{0,i_{1},{j_0}^{\prime}} \neq 0$ for some variable $y_{i_{1},{j_0}^{\prime}}$ at $(x_{\rm O},y_{\rm O})$ where we abuse the notation in STEP 1-1 for example. Let us abuse the same notation. We also have $0$ as the value of the partial derivative at the point for any variable $y_{i_2,{j}^{\prime}}$ satisfying $i_2 \neq i_1$. This is like an argument in STEP 1-1. This counts $l_2-l_3$ of the rank of the differential of the restriction of the map into ${\mathbb{R}}^{l_2}$ defined canonically from
the $l_2>0$ real polynomials. In other words, we choose the $l_2-l_3$ real polynomials from the $l_2$ real polynomials choosing such $l_2-l_3$ elements $i_y$. Remember also that the restriction of $m_{l_1,l_2}$ to $\{j_i\}_{i=1}^{l_3}$ is assumed to be injective in the condition (\ref{mthm:1.3}). 

We choose an integer $i_x \in m_{l_1,l_2}(\{j_i\}_{i=1}^{l_3})$. We discuss partial derivatives of the function defined canonically from the real polynomial ${\prod}_{j \in {m_{l_1,l_2}}^{-1}(i_x)} (f_{j}(x_1,\cdots,x_n))-||y_{i_x}||^2={\prod}_{j \in {m_{l_1,l_2}}^{-1}(i_x)} (f_{j}(x_1,\cdots,x_n))-{\Sigma}_{j^{\prime}=1}^{m_{l_2}(i_x)+1} {y_{i_x,j^{\prime}}}^2$.
We investigate the partial derivatives for the variables $y_{i_1,{j_0}^{\prime}}$ and $y_{i_2,{j}^{\prime}}$, defined just before. We always have $0$ as the values by our definition. 
More precisely, by our definition, at $(x_{\rm O},y_{\rm O})$, $y_{i_x}$ is the origin. We also see that the value of the partial derivative for any variable $y_{i,j}$ is $0$ at the point $(x_{\rm O},y_{\rm O})$.
We also discuss the partial derivative for each variable $x_{i_3}$. For the integer $i_x$, we have the unique number $j_{i_x} \in \{j_i\}_{i=1}^{l_3}$ satisfying $i_x=m_{l_1,l_2}(j_{i_x})$. This follows from the condition (\ref{mthm:1.3}) that the restriction of $m_{l_1,l_2}$ to $\{j_i\}_{i=1}^{l_3}$ is injective. 
The value of the partial derivative is represented as the product of the following two numbers.
\begin{itemize}
	\item The value of the partial derivative of the function $f_{j_{i_x}}(x_1,\cdots,x_n)$ for the variable $x_{i_3}$ at $x_{\rm O}$. 
	Each given hypersurface $S_j$ is assumed to be non-singular and the zero set of the real polynomial $f_j$. 
	Thus the value of the partial derivative is not $0$ for some variable $x_{i_3}:=x_{i_{3,0}}$ ($1 \leq i_{3,0} \leq n$).
	\item The product of the real numbers obtained as the values of the canonically defined real polynomial functions in the family $\{f_{j}(x_1,\cdots,x_n)\}_{j \in {m_{l_1,l_2}}^{-1}(i_x)-\{j_{i_x}\}}$ at $x_{\rm O}$. By our definitions and assumptions with our construction, we can see that this is not $0$.
	\end{itemize}
In addition, remember the assumption on the transversality on intersections for hypersurfaces $S_j$ or the condition (\ref{mthm:1.2}). 
This counts $l_3$ of the rank of the differential of the restriction of the map into ${\mathbb{R}}^{l_2}$ defined canonically from
the $l_2>0$ real polynomials. 
In other words, we choose the $l_3$ real polynomials from the $l_2$ real polynomials respecting such elements $i_x$ and $j_{i_x}$.
This also counts "$l_3$" of the rank independently from the "$l_2-l_3$" before. This is thanks to the following.

\begin{itemize} 
\item At $(x_{\rm O},y_{\rm O})$, $y_{i_x}$ is the origin.
\item For the $l_3$ polynomials the values of the partial derivatives for the variables $y_{i_1,{j_0}^{\prime}}$, $y_{i_2,{j}^{\prime}}$ and an arbitrary variable $y_{i,j}$ are always $0$.
\end{itemize} 

Integrating these arguments, we can see that at $(x_{\rm O},y_{\rm O})$, the differential of the restriction of the map into ${\mathbb{R}}^{l_2}$ defined canonically from
 the $l_2>0$ real polynomials is of rank $l_2>0$.
We have a result like the one in STEP 1-1. \\
\ \\
STEP 1-3 Additional arguments to show that the set $S_0$ is a smooth compact submanifold of $U_D \times {\mathbb{R}}^{m-n+l_2}$ and ${\mathbb{R}}^{m+l_2} \supset U_D \times {\mathbb{R}}^{m-n+l_2}$ with no boundary: our argument here also shows the relation $S_0 \subset \overline{D} \times {\mathbb{R}}^{m-n+l_2} (\subset U_D \times {\mathbb{R}}^{m-n+l_2})$ for example. \\
\ \\
By the assumption on $U_D$, which is an arbitrary sufficiently small connected open neighborhood of the closure $\overline{D}$ of a nicely given connected open set $D \subset {\mathbb{R}}^n$, we can see that we cannot take a point $x \in U_D-\overline{D}$ as the component of $(x,y) \in S_0$. We discuss this precisely.
First we choose an arbitrary point $x_{U} \in U_D-\overline{D}$. By the rule of choosing our open neighborhood $U_D$ of the compact and connected set $\overline{D}$,
we can discuss $U_D$ and related important subsets as follows.
\begin{itemize}
\item For each point $p$ in $\overline{D}-D \subset {\bigcup}_{j=1}^l S_j$, we can choose a sufficiently small open disk $D_p$ in ${\mathbb{R}}^n$ and we do. We can also regard $p$ as a point as follows.
\begin{itemize}
\item It holds that $p \in {\bigcap}_{i=1}^{i_{p,0}} S_{j_i} \bigcap \overline{D}$ for some increasing sequence $\{j_i\}_{i=1}^{i_{p,0}}$.
\item It also holds that $p \notin {\bigcap}_{i=1}^{i_{p,0}+1} S_{{j^{\prime}}_{i}}$ for any increasing sequence $\{{j^{\prime}}_i\}_{i=1}^{i_{p,0}+1}$ containing the sequence $\{j_i\}_{i=1}^{i_{p,0}}$ as a subsequence. 
\end{itemize}
Remember our definitions and conditions on the compact set $
\overline{D}$ and the zero sets $S_j$
for example. We can also say that at any point of $D_p$, the value of $f_j$ is positive for any $j \notin \{j_i\}_{i=1}^{i_{p,0}}$.
\item 
For the compact and connected subset $\overline{D}$, we can find the open cover $\{D\} \sqcup \{D_p\}_{p \in \overline{D}-D}$ . We can choose finitely many open sets to cover this compact and connected set $\overline{D}$. We choose such sets.
\item (Different from "the original $U_D$",) we can choose $U_D$ as the open set $U_D:={U_{D}}^{\prime}$ as the union of the finitely many open sets chosen before instead. It can and must be chosen as a connected set by our definitions, conditions and construction. We may also choose our new $U_D:={U_D}^{\prime \prime}$ as an arbitrary open connected subset of ${U_D}^{\prime}$ if it is also an open neighborhood of $\overline{D}$.
\end{itemize}

By the condition that the restriction of $m_{l_1,l_2}$ to $\{j_i\}_{j=1}^{l_3}$ is injective, we can choose an arbitrary point $x_{U} \in U_D-\overline{D}$ in such a way that we also have the following.
\begin{itemize}
\item For some sufficiently small open disk $D_{p_0}$ in the family of the chosen finitely many open disks just before, $x_U \in D_{p_0}$.
\item There exists an integer $1 \leq i_{x_{U,0}} \leq l_2$ with the following properties.
\begin{itemize} 
\item It holds that $f_{j}(x_{U})>0$ for each $j \in  {m_{l_1,l_2}}^{-1}(i_{x_{U,0}})$ except one number $j=j_{x_{U,0}} \in {m_{l_1,l_2}}^{-1}(i_{x_{U,0}})$.
\item It also holds that $f_{j_{x_{U,0}}}(x_{U})<0$.
\end{itemize}
\end{itemize}
Here we also see that the value ${\prod}_{j \in {m_{l_1,l_2}}^{-1}(i_{x_{U,0}})} f_{j}(x_{U})$ is smaller than $0$. 

This means that we cannot take a point $x \in U_D-\overline{D}$ satisfying $(x,y) \in S_0$.

By the argument and STEPs 1-1 and 1-2 with implicit function theorem, the set $S_0$ is an $m$-dimensional smooth compact and connected submanifold in $U_D \times {\mathbb{R}}^{m-n+l_2}$ and has no boundary. This is also regarded as a non-singular real algebraic manifold in the real affine space ${\mathbb{R}}^{m+l_2}$. \\
\ \\
STEP 2 Our desired map $f:M \rightarrow {\mathbb{R}}^n$ on $M:=S_0$. \\
We restrict the canonical projection ${\pi}_{m+l_2,n}$ to $M$. Thus we have a smooth real algebraic map $f:M:=S_0 \rightarrow {\mathbb{R}}^n$. We check that $f$ is our desired map.
 
Thanks to our previous arguments, our non-singular real algebraic manifold $M=S_0$ is a connected component of the intersection of finitely many zero sets of real polynomials (in the real affine space).
STEP 1-3 and our definition of $S_0$ also show that the image $f(M)$ is a subset of $U_D$ containing $\overline{D}$ as a subset. Each point of $U_D-\overline{D}$ is also shown to be outside the image. We can see $f(M)=\overline{D}$. 
On the smooth manifolds of the preimages (in Theorem \ref{thm:1}), we can also easily see our desired fact from our definitions, conditions and construction. 

We can see that $f$ is our desired map. 

We can choose $M$ as a connected manifold in the case the function $m_{l_2}$  is always positive, easily: for example in this case preimages are always connected (if they are not empty).
\\
%




This completes the proof.
	
\end{proof}

We can know the following easily by the construction and local structures of the functions and the maps. We can also say that this is also thanks to an argument presented in STEP 1-2 and announced to be essential there. 

\begin{Thm}
\label{thm:2}
In Main Theorem \ref{mthm:1} and Theorem \ref{thm:1}, we can have our map $f:M \rightarrow {\mathbb{R}}^n$ enjoying the following property.
\begin{enumerate}
\item The image $f(S(f))$ of the singular set of $f$ satisfies the relation $f(S(f))=\overline{D}-D$. 
\item Choose any point $p \in {\bigcap}_{i=1}^{l_3} S_{j_i}$ such that for any increasing sequence $\{{j^{\prime}}_i\}_{i=1}^{l_3+1}$ containing the sequence $\{j_i\}_{i=1}^{l_3}$, the condition $p \notin {\bigcap}_{i=1}^{l_3+1} S_{{j^{\prime}}_i}$ is satisfied. Each point $q \in f^{-1}(p)$ satisfies the relation ${df}_q(T_qM)=T_p({\bigcap}_{i=1}^{l_3} S_{j_i})$.
\end{enumerate}
\end{Thm}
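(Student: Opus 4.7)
The plan is to analyze the differential $df_q$ at an arbitrary point $q = (x_0,y_0) \in M = S_0$, using the explicit cut-out description of $M$ from the proof of Theorem \ref{thm:1}. Recall that $M$ is the zero set in $U_D \times {\mathbb{R}}^{m-n+l_2}$ of the $l_2$ real polynomials $g_i(x,y) := {\prod}_{j \in {m_{l_1,l_2}}^{-1}(i)} f_j(x) - ||y_i||^2$, and that $f$ is the restriction of ${\pi}_{m+l_2,n}$. Hence $T_qM$ is the kernel of $\bigl(dg_1|_q,\ldots,dg_{l_2}|_q\bigr)$, and $df_q(v_x,v_y) = v_x$. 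Both statements will drop out once we identify which linear conditions on $v_x$ are forced by the vanishing of $dg_i|_q$ for each $i$.

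For each $i \in {\mathbb{N}}_{l_2}$, split into cases exactly as in STEPs 1-1 and 1-2. If $y_{0,i} \neq 0$, then $\partial g_i/\partial y_{i,{j_0}^{\prime}} = -2 y_{0,i,{j_0}^{\prime}} \neq 0$ for some ${j_0}^{\prime}$, so the equation $dg_i|_q(v_x,v_y)=0$ can be solved for $v_{y_{i,{j_0}^{\prime}}}$ in terms of the remaining variables and thus puts no restriction on $v_x$. If $y_{0,i} = 0$, then ${\prod}_{j \in {m_{l_1,l_2}}^{-1}(i)} f_j(x_0) = 0$, so $x_0 \in S_j$ for some $j \in {m_{l_1,l_2}}^{-1}(i)$; by condition (\ref{mthm:1.3}) together with the assumption that $p = x_0$ lies on no deeper intersection than ${\bigcap}_{i=1}^{l_3} S_{j_i}$, there is a \emph{unique} such $j = j_{i_x} \in {m_{l_1,l_2}}^{-1}(i) \cap \{j_i\}_{i=1}^{l_3}$, and all other $f_j(x_0)$ for $j \in {m_{l_1,l_2}}^{-1}(i) \setminus \{j_{i_x}\}$ are strictly positive. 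Consequently, all $y$-partial derivatives of $g_i$ vanish at $q$ and $dg_i|_q$ reduces to a nonzero multiple of $df_{j_{i_x}}|_{x_0}$, so the constraint becomes $df_{j_{i_x}}|_{x_0}(v_x) = 0$.

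Collecting constraints, $df_q(T_qM) = \{v_x \in T_{x_0}{\mathbb{R}}^n \mid df_{j_i}|_{x_0}(v_x)=0 \text{ for } i=1,\ldots,l_3\}$, where $\{j_i\}_{i=1}^{l_3}$ is the set of indices with $x_0 \in S_{j_i}$. By the transversality hypothesis (\ref{mthm:1.2}) and the identification of tangent spaces noted in the exposition after Main Theorem \ref{mthm:1}, this subspace is exactly $T_{x_0}({\bigcap}_{i=1}^{l_3} S_{j_i})$, which has dimension $n - l_3$. This immediately gives assertion (2). For assertion (1), observe that $\dim M = m \geq n$, so $q$ is singular iff $df_q$ fails to be surjective, iff $l_3 \geq 1$, iff $x_0 \in \overline{D} - D$. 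The inclusion $f(S(f)) \subseteq \overline{D} - D$ follows. For the reverse inclusion, any $p \in \overline{D} - D$ lies in some ${\bigcap}_{i=1}^{l_3} S_{j_i}$ with $l_3 \geq 1$, and a preimage exists (take $y_i = 0$ for $i \in m_{l_1,l_2}(\{j_i\})$ and any $y_i$ of the appropriate norm on a sphere otherwise), and every such preimage point is singular by the above rank calculation.

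The only subtle point, which I view as the main obstacle, is the step inside the case $y_{0,i} = 0$: pinning down that exactly one $f_j$ with $j \in m_{l_1,l_2}^{-1}(i)$ vanishes at $x_0$ requires both the injectivity clause of (\ref{mthm:1.3}) (to rule out two elements of $\{j_i\}_{i=1}^{l_3}$ sharing the same $m_{l_1,l_2}$-value) and the maximality assumption on $l_3$ (to rule out an extra vanishing $f_j$ outside $\{j_i\}$), and the transversality assumption (\ref{mthm:1.2}) is then what guarantees the $l_3$ forms $df_{j_i}|_{x_0}$ are linearly independent, so the image of $df_q$ achieves the full dimension $n - l_3$ rather than something smaller.
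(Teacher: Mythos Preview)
Your argument is correct and follows essentially the same route as the paper. The paper does not give a separate detailed proof of Theorem~\ref{thm:2}; it simply remarks that the statement follows ``easily by the construction and local structures of the functions and the maps'' and refers back to the Jacobian analysis in STEP~1-2 of the proof of Main Theorem~\ref{mthm:1} with Theorem~\ref{thm:1}. Your write-up is exactly that analysis made explicit: you compute $df_q(T_qM)$ by examining the $y$- and $x$-partials of each defining polynomial $g_i$, use the decoupling of the $y_i$-blocks to see that indices $i$ with $y_{0,i}\neq 0$ impose no constraint on $v_x$, and use conditions (\ref{mthm:1.2}) and (\ref{mthm:1.3}) together with the maximality of $\{j_i\}$ to reduce the remaining constraints to the independent linear forms $df_{j_i}|_{x_0}$. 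This is precisely what STEP~1-2 establishes, and your deduction of both assertions from the resulting rank formula $\dim df_q(T_qM)=n-l_3$ is clean and complete.
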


Note again that \cite{kitazawa3} presents a specific case for Main Theorem \ref{mthm:1}. These hypersurfaces $S_j$ do not intersect and $l_2=1$ there.

\subsection{A remark on structures of obtained maps.}

We present exposition on global structures of obtained maps. We do not need to understand this rigorously in the present paper.

In the case where the hypersurfaces do not intersect, the obtained map is a so-called {\it special generic} map. There exist such a nice map $f_0:M \rightarrow {\mathbb{R}}^n$ and a diffeomorphism $\Phi:M \rightarrow M$ enjoying the relation $f_0 \circ \Phi=f$.
It is first presented as a remark which is not essential in the paper.
Discussing this rigorously and carefully, the author has found that this may not be trivial. \cite{kitazawa8} proves this rigorously first, after earlier versions of the present paper have appeared.

In the case where the hypersurfaces may intersect, the map is (, at least topologically,) locally regarded as a so-called {\it moment map}.

Here we adopt methods for some presentations in \cite{kitazawa7} for example.

{\it Special generic} maps are, in short, higher dimensional versions and generalizations of the canonical projections of the unit spheres and Morse functions on spheres with exactly two singular points.
For special generic maps, see \cite{saeki1} and see also recent preprints \cite{kitazawa5, kitazawa10} of the author. Of course, arguments and results in \cite{kitazawa5, kitazawa10} and \cite{kitazawa6}, presented later, are independent of our study and we do not need to understand them precisely. We only give related short remark.

First, see \cite{buchstaberpanov, delzant} for moment maps on so-called {\it symplectic toric} manifolds. \cite{kitazawa6} is a preprint introducing a certain class of smooth maps generalizing the class of special generic maps first. There the class of {\it simply generalized special generic} maps has been introduced. It also investigates their topological properties, especially the cohomology rings of the manifolds. This respects and extends the results of \cite{kitazawa10}. These maps are locally moment maps. We can have maps locally, at least topologically, special generic maps or simply generalized special generic maps being not special generic in cases where the hypersurfaces $S_j$ do not intersect. Our main result of \cite{kitazawa3, kitazawa9} is for an explicit and simplest case and for special generic cases.

For related singularity theory of differentiable maps, see \cite{golubitskyguillemin} for example. As presented in the assumption of Main Theorem \ref{mthm:1}, the notion "transversality" is also from this theory, for example.

For Morse-Bott functions, see \cite{bott} for example. We present its definition in (the second subsection of) the third section as a kind of short review. Some Morse-Bott functions are generalized to the presented maps as higher dimensional versions. 

For here, see also \cite{kohnpieneranestadrydellshapirosinnsoreatelen}. This is on explicit classifications of regions surrounded by non-singular real algebraic hypersurfaces intersecting satisfying the condition on the "transversality" like our case. Related to this, \cite{bodinpopescupampusorea, sorea1, sorea2} are mainly on regions surrounded by non-singular real algebraic hypersurfaces with no intersections. This appears explicitly in Example 1 and FIGURE 1 of \cite{kitazawa3}.

\section{Applications of Main Theorem \ref{mthm:1} and Theorems \ref{thm:1} and \ref{thm:2}: functions represented as the compositions of maps obtained through (arguments in our proof of) them with the canonical projections.}
\subsection{Graphs and Reeb graphs.} 
({\it Reeb}) {\it graphs} are our fundamental tools. 

A {\it graph} can be defined as a $1$-dimensional CW complex with the {\it vertex set} and the {\it edge set}. They are the set of all $0$-dimensional cells and the set of all $1$-dimensional cells, respectively. A {\it vertex} is an element of the vertex set. An {\it edge} is an element of the edge set. 
The closure of an edge homeomorphic to a circle is a {\it loop}. We do not discuss graphs having loops. In other words, a graph is always regarded as a $1$-dimensional simplicial complex and a polyhedron. Furthermore, it is {\it finite}. In a word, its vertex set and edge set are finite. On the other hand, a graph may be a so-called multi-graph. This means that a graph may have an edge connecting two distinct vertices. An {\it isomorphism} from a graph $K_1$ onto another graph $K_2$ means a piecewise smooth homeomorphism mapping the edge set and the vertex set of $K_1$ onto those of $K_2$. This defines a natural equivalence relation on the family of all graphs here. Two graphs $K_1$ and $K_2$ are {\it isomorphic} if an isomorphism from $K_1 $ onto $K_2$ exists. 

For a smooth function $c:X \rightarrow \mathbb{R}$, we can define an equivalence relation by the rule that two points $x_1$ and $x_2$ in $X$ are equivalent if and only if they are in a same connected component of a preimage $c^{-1}(y)$ ($y \in \mathbb{R}$). Let $W_c:=X/ {\sim}_c$ denote the quotient space of $X$ under this relation $\sim c$. Let $q_c:X \rightarrow W_c$ denote the quotient map.

\begin{Def}
If the Reeb space $W_c$ has the structure of a graph by the following rule, then $W_c$ is the {\it Reeb graph} of $c$. A point $p$ is a vertex if and only if ${q_c}^{-1}(p)$ contains some singular points of $c$.
\end{Def}
We introduce Theorem 3.1 of \cite{saeki2}.
For a smooth function on a compact manifold having finitely many singular values of it, the quotient space $W_c$ is homeomorphic to a graph. If the manifold of the domain is closed, then we can define the Reeb graph $W_c$ of $c$. {\rm Morse}({\rm -Bott}) functions and functions of some considerably wide classes satisfy such conditions.

The Reeb graph of a smooth function has been already defined in \cite{reeb} for example. The Reeb graphs of nice smooth functions have been important tools and objects in the theory on singularities and applications to geometry. These graphs have some important information on the manifolds compactly.

\subsection{Functions represented as the compositions of maps obtained through Main Theorem \ref{mthm:1} and Theorems \ref{thm:1} and \ref{thm:2} with the canonical projections to the $1$-dimensional real affine space.}

We discuss functions represented as the compositions of maps obtained through (arguments in our proof of) Main Theorem \ref{mthm:1} and Theorems \ref{thm:1} and \ref{thm:2}.
We mainly investigate cases where the hypersurfaces $S_j$ are circles in the plane ${\mathbb{R}}^2$.
 
Hereafter, we need elementary knowledge on Morse(-Bott) functions. We omit the definition of a Morse function. A {\it Morse-Bott} function is a smooth function which is represented as the composition of a submersion with a Morse function around each singular point (for suitable local coordinates).


Hereafter, a {\it circle} means a circle in ${\mathbb{R}}^2$ centered at a point $p_0:=(p_{0,1},p_{0,2}) \in {\mathbb{R}}^2$ and of a radius $r_0>0$ unless otherwise stated. This is represented as the zero set of the real polynomial ${(x_1-p_{0,1})}^2+{(x_2-p_{0,2})}^2-{r_0}^2$ ($x=(x_1,x_2) \in {\mathbb{R}}^2$). Of course this is non-singular. We call the point $(p_{0,1} \pm r,p_{0,2})$ ($(p_{0,1},p_{0,2} \pm r)$) a {\it vertical} (resp. {\it horizontal}) {\it pole} of the circle. 

\begin{Prop}
\label{prop:1}
In Main Theorem \ref{mthm:1} and Theorems \ref{thm:1} and \ref{thm:2}, let $n=2$ and suppose that the family $\{S_j\}_{j=1}^{l_1} \subset {\mathbb{R}}^2$ is a family of mutually disjoint circles and that the relation $\overline{D}-D={\sqcup}_{j=1}^{l_1} S_j$ holds. We do our construction of $f:M \rightarrow {\mathbb{R}}^2$ as presented in our proof. The function $f_0:={\pi}_{2,1} \circ f:M \rightarrow \mathbb{R}$ enjoys the following.
\begin{enumerate}
\item Let $S_j$ be a circle in ${\mathbb{R}}^2$ centered at a point $p_j:=(p_{j,1},p_{j,2}) \in {\mathbb{R}}^2$ and of a radius $r_j>0$.
The singular set $S(f_0) \subset M$ of the function $f_0$ is represented as $f^{-1}({\sqcup}_{j=1}^{l_1} \{(p_{j,1}-r_j,p_{j,2}),(p_{j,1}+r_j,p_{j,2})\})$.
\item The function $f_0$ is a Morse-Bott function. In the case $l_2=1$, we can have our function $f_0$ as a Morse function.
\end{enumerate}

\end{Prop}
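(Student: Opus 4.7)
The plan is to reduce everything to an analysis at a singular point of $f$ using the local description of $M = S_0$ coming from the proof of Main Theorem~\ref{mthm:1}, together with Theorem~\ref{thm:2}.

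First I would settle claim (1) by a differential computation. Writing $f_0 = \pi_{2,1} \circ f$, we have $d(f_0)_q = d(\pi_{2,1})_{f(q)} \circ df_q$. If $q \notin S(f)$ then $df_q$ is surjective onto $T_{f(q)}\mathbb{R}^2$, so $d(f_0)_q$ has rank $1$ and $q$ is regular. If $q \in S(f)$ then, because the circles are mutually disjoint, only the case $l_3 = 1$ can occur in Theorem~\ref{thm:2}, so $f(q) \in S_j$ for a unique $j$ and $df_q(T_q M) = T_{f(q)} S_j$. Hence $q \in S(f_0)$ iff $d(\pi_{2,1})(T_{f(q)} S_j) = 0$, iff $T_{f(q)} S_j$ is vertical, iff $f(q) \in \{(p_{j,1} \pm r_j,\, p_{j,2})\}$. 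Combining these cases yields the stated description of $S(f_0)$.

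For claim (2) I would produce a local normal form of $f_0$ around each such singular point $q$. Fix a pole $p = (p_{j,1} + \varepsilon r_j, p_{j,2})$ with $\varepsilon \in \{\pm 1\}$ and use the shifted coordinates $u = x_1 - (p_{j,1} + \varepsilon r_j)$, $v = x_2 - p_{j,2}$, so that $f_0 = u + \mathrm{const}$ and the polynomial defining $S_j$ becomes $-2\varepsilon r_j u - u^2 - v^2$ (up to sign convention for $f_j$). Set $i_0 = m_{l_1,l_2}(j)$. Because the circles are disjoint, for every $j' \in m_{l_1,l_2}^{-1}(i_0) \setminus \{j\}$ the value $f_{j'}$ is nonzero at $p$, and for every $i \neq i_0$ the product $\prod_{j' \in m_{l_1,l_2}^{-1}(i)} f_{j'}$ is a strictly positive smooth function near $p$. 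Thus locally the defining equations of $M$ read
\[
f_j(u,v)\,C_{i_0}(u,v) = \|y_{i_0}\|^2, \qquad H_i(u,v) = \|y_i\|^2 \quad (i \neq i_0),
\]
with $C_{i_0}(0,0) > 0$ and $H_i(0,0) > 0$. The last equations each cut out a smooth sphere factor of positive radius, varying smoothly with $(u,v)$, and at $q$ itself $y_{i_0} = 0$.

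Next I would use $(v, y_{i_0})$ together with local coordinates on the sphere factors $\{y_i : \|y_i\|^2 = H_i\}$ ($i \neq i_0$) as local coordinates on $M$ near $q$, solving the first equation for $u$ via the implicit function theorem (the linear term $-2\varepsilon r_j\,C_{i_0}(0,0)$ in $u$ is nonzero). Expanding to second order gives
\[
f_0 = \mathrm{const} - \frac{1}{2\varepsilon r_j}\,v^2 - \frac{1}{2\varepsilon r_j\,C_{i_0}(0,0)}\,\|y_{i_0}\|^2 + (\text{higher order}),
\]
which is independent of the coordinates on the sphere factors for $i \neq i_0$. This is precisely the Morse--Bott normal form: the critical locus through $q$ inside the fibre of $f_0$ is the smooth submanifold $\{v = 0,\, y_{i_0} = 0\}$, diffeomorphic to $\prod_{i \neq i_0} S^{m_{l_2}(i)}$, and the Hessian is definite (with sign determined by $\varepsilon$) on the complementary directions $(v, y_{i_0})$. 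When $l_2 = 1$, there is no complementary sphere factor, the critical submanifold degenerates to $\{q\}$, and the Hessian is definite on all of $T_q M$, so $f_0$ is a Morse function.

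The routine computation is the Taylor expansion; the only delicate point is justifying the choice of local coordinates on $M$, which requires using the disjointness of the circles to guarantee that the equations for $i \neq i_0$ and the factors of the $i_0$-equation other than $f_j$ stay nonzero on a neighbourhood, so that implicit function theorem applies cleanly and the resulting Hessian decomposes as above.
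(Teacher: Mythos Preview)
Your argument is correct and close to the paper's. For part (1) you invoke Theorem~\ref{thm:2} directly to identify $df_q(T_qM)$ with $T_{f(q)}S_j$ and then read off when $\pi_{2,1}$ kills it; the paper instead defers to its construction and external references for the inclusion $S(f_0)\subset f^{-1}(\text{poles})$. For part (2), both proofs use the same rescaling $y_i\mapsto y_i/\sqrt{H_i}$ for $i\neq i_0$ to split off the sphere factors $\prod_{i\neq i_0}S^{m_{l_2}(i)}$. After that the paper argues structurally --- identifying $f$ locally as the product of a Morse function on $D^{m_{l_2}(i_0)+1}$ with the identity map on $S_j\times\prod_{i\neq i_0}S^{m_{l_2}(i)}$, and then invoking that $\pi_{2,1}$ restricted to a circle is Morse --- whereas you compute the Hessian of $f_0$ directly via the implicit function theorem and a second-order expansion in $(v,y_{i_0})$. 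These amount to the same computation; your version is more explicit, the paper's more conceptual.

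One small correction: your claim that the normal Hessian is \emph{definite} holds only when $D$ lies inside $S_j$ (the sign convention for $f_j$ you actually wrote out). When $D$ lies outside $S_j$ the sign of $f_j$ flips, and the expansion becomes $f_0=\mathrm{const}-\tfrac{1}{2\varepsilon r_j}\,v^2+\tfrac{1}{2\varepsilon r_j\,C_{i_0}(0,0)}\,\|y_{i_0}\|^2+\cdots$, which is indefinite. This does not affect the Morse--Bott conclusion, since only nondegeneracy of the normal Hessian is required and that holds in both cases; so replace ``definite'' by ``nondegenerate'' throughout and the argument stands.
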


\begin{proof}
Most of our idea for our proof is presented in our preprint \cite{kitazawa9} and the proof of "Main Theorem \ref{mthm:2} there".
We can also argue from "Discussion 14 of \cite{kollar}" as in \cite{kitazawa9}. We can discuss \cite{kollar} independently. We do not need to understand the arguments of the preprint \cite{kitazawa9}.

By the presented arguments, the singular set $S(f_0)$ is represented as a subset of $f^{-1}({\sqcup}_{j=1}^{l_1} \{(p_{j,1}-r_j,p_{j,2}),(p_{j,1}+r_j,p_{j,2})\})$. We can also understand this from our construction here.

Let $1 \leq j_0 \leq l_1$ be an integer. Let us abuse the notation $y=(y_1,\cdots y_{l_2})$, used in our previous proof of Main Theorem \ref{mthm:1} with Theorems \ref{thm:1} and \ref{thm:2}. We also abuse our notation there other than this in our present proof. 
We choose a sufficiently small positive number $a_{j,0}>0$ and an arbitrary number $t_0 \in [0,1]$ of the interval $[0,1]$.
We check the zero set of the polynomial ${\prod}_{j \in {m_{l_1,l_2}}^{-1}(m_{l_1,l_2}(j_0))} (f_{j}(x_1,x_2))-a_{j,0} t_0$. The relation $a_{j,0} t_0={||y_{m_{l_1,l_2}(j_0)}||}^2$ holds at a point $(x,y) \in \overline{D} \times {\mathbb{R}}^{m-n+l_2} \subset {\mathbb{R}}^{m+l_2}$ ($x=(x_1,x_2)$) in this zero set. We apply some of our arguments in the proof of "Main Theorems 1 and 2 of \cite{kitazawa8}" or the arguments from \cite{kollar}. Thus, around the zero set of the polynomial $f_{j}=f_{j_0}$, our map $f:M \rightarrow {\mathbb{R}}^2$ is represented as the product map of a natural Morse function on the disk $D^{m_2 \circ m_{l_1,l_2}(j_0)+1}$ and the identity map on the product of the zero set of the polynomial $f_{j_0}$ and a sphere diffeomorphic to the product ${\prod}_{j \in {\mathbb{N}_{l_2}}-\{m_{l_1,l_2}(j_0)\}} S^{m_2(j)}$ in the case $l_2>1$ and the product map of a natural Morse function on the disk $D^{m_2 \circ m_{l_1,l_2}(j_0)+1}$ and the identity map on the zero set of the polynomial $f_{j_0}$ in the case $l_2=1$. 

We discuss these functions and maps more precisely.


The identity map on ${\prod}_{j \in {\mathbb{N}_{l_2}}-\{m_{l_1,l_2}(j_0)\}} S^{m_2(j)}$ comes from a natural map mapping each point $(x,y) \in M \bigcap (\overline{D} \times {\mathbb{R}}^{m-n+l_2})=M \bigcap (\overline{D} \times {\mathbb{R}}^{m_2 \circ m_{l_1,l_2}(j_0)+1} \times {\mathbb{R}}^{m-n+l_2-m_2 \circ m_{l_1,l_2}(j_0)-1} \subset {\mathbb{R}}^{m+l_2})$ which is also located around the zero set of $f_{j_0}$ to an element of $\overline{D} \times {\mathbb{R}}^{m_2 \circ m_{l_1,l_2}(j_0)+1} \times {\prod}_{j \in {\mathbb{N}_{l_2}}-\{m_{l_1,l_2}(j_0)\}} S^{m_2(j)}$.

We explain this natural map explicitly.
We remember the notation $y=(y_1,\cdots y_{l_2})$. For $i \in {\mathbb{N}_{l_2}}-\{m_{l_1,l_2}(j_0)\}$, the component $y_i$ is mapped onto the component $\frac{1}{\sqrt{{\prod}_{j \in {m_{l_1,l_2}}^{-1}(i)} (f_{j}(x_1,x_2))}} y_i \in S^{m_2(i)}$ at $(x,y)$. The component $x$ is mapped to $x$ itself and the component $y_{m_{l_1,l_2}(j_0)}$ is mapped to $y_{m_{l_1,l_2}(j_0)}$ itself. Around the zero set of the polynomial $f_{j_0}$ and its preimage by the map $f$, this map also gives a diffeomorphism.

Remember that restrictions of the projection ${\pi}_{2,1}$ to circles are Morse functions. We can show this as a kind of elementary exercises on Morse functions.

These arguments complete the proof. 

\end{proof}
We present  Main Theorem \ref{mthm:2}. This gives some explicit families of real algebraic functions enjoying the following properties.
\begin{itemize}
\item These functions are obtained as compositions of real algebraic maps into ${\mathbb{R}}^2$ in Main Theorem \ref{mthm:1} and Theorems \ref{thm:1} and \ref{thm:2} with ${\pi}_{2,1}$.
\item These functions are Morse-Bott.
\item The images of real algebraic maps into ${\mathbb{R}}^2$ before are bounded and connected regions in ${\mathbb{R}}^2$ surrounded by circles intersecting satisfying transversality.
\item The Reeb graphs of the functions collapse to (given) graphs. For example, ones isomorphic to the Reeb graphs of Morse-Bott functions in Proposition \ref{prop:1}.
\item The manifolds $M$ in these theorems are the zero sets of some real polynomial maps.
\end{itemize}
We first define the class of maps constructed in (proving) Main Theorem \ref{mthm:1} and Theorems \ref{thm:1} and \ref{thm:2}.
\begin{Def}
In Main Theorem \ref{mthm:1} and Theorems \ref{thm:1} and \ref{thm:2}, 
the connected component $S_0:=\{(x,y_1,\cdots,y_{l_2}) \in U_D \times {\prod}_{i=1}^{l_2} {\mathbb{R}}^{m_{l_2}(i)+1} \subset {\mathbb{R}}^{m+l_2} \mid {\prod}_{j \in {m_{l_1,l_2}}^{-1}(i)} (f_{j}(x_1,\cdots,x_n))-{||y_i||}^2=0, i \in {\mathbb{N}}_{l_2}\} \subset {\mathbb{R}}^{m+l_2}$ of the zero set of a real algebraic map is defined. We have also defined the $m$-dimensional non-singular real algebraic manifold $M:=S_0$.  The map $f$ is defined as the composition of the canonical embedding into ${\mathbb{R}}^{m+l_2}$ with ${\pi}_{m+l_2,n}$.
We can canonically define $M$ and $f$ from the data $(D, \{f_{j}\}_{j=1}^{l_1},m_{l_1,l_2},m_{l_2})$. Let us use the notation $M:=M_{(D,\{f_{j}\}_{j=1}^{l_1},m_{l_1,l_2},m_{l_2})}$ and $f:=f_{(D,\{f_{j}\}_{j=1}^{l_1},m_{l_1,l_2},m_{l_2})}$.
We also call the map $f$ the {\it moment-like} map reconstructed from $(D, \{f_{j}\}_{j=1}^{l_1},m_{l_1,l_2},m_{l_2})$. 
\end{Def}
Let the composition of such a map with ${\pi}_{n,1}$, which is a smooth function, denoted by $f_{0,(D,\{f_{j}\}_{j=1}^{l_1},m_{l_1,l_2},m_{l_2})}$. 
Hereafter, we respect the case each $S_j$ is a circle unless otherwise stated. We use the notation for "the set $S_j$" instead of that for "the polynomial $f_j$" in such a case. This contains no problems. More rigorously, here the circle $S_j$ is for a polynomial ${(x_1-p_{0,1})}^2+{(x_2-p_{0,2})}^2-{r_0}^2$ or ${r_0}^2-{(x_1-p_{0,1})}^2-{(x_2-p_{0,2})}^2$ ($x=(x_1,x_2) \in {\mathbb{R}}^2$). Each of these two types is chosen in such a way that this is compatible with our open set $D$ and Theorem \ref{mthm:1} (\ref{mthm:1.1}).
\begin{MainThm}
\label{mthm:2}
Given a situation as in Proposition \ref{prop:1}, in Main Theorem \ref{mthm:1} and Theorems \ref{thm:1} and \ref{thm:2} satisfying the following. Let $n=2$, $l_2:=l_{0,2}=1$ and suppose that the family $\{S_j:=S_{0,j}\}_{j=1}^{l_{0,1}} \subset {\mathbb{R}}^2$ {\rm (}$l_1:=l_{0,1}${\rm )} is a family of mutually disjoint circles and that the relation $\overline{D_0}-D_0={\sqcup}_{j=1}^{l_{0,1}} S_j$ {\rm (}$D:=D_0${\rm )} holds. 
We define the moment-like map $f=f_{(D, \{S_{j}\}_{j=1}^{l_1},m_{l_1,l_2},m_{l_2})}:=f_{(D_0, \{S_{0,j}\}_{j=1}^{l_{0,1}},m_{l_{0,1},l_{0,2}},m_{l_{0,2}})}:M_{(D, \{S_{j}\}_{j=1}^{l_1},m_{l_1,l_2},m_{l_2})}:=M_{(D_0, \{S_{0,j}\}_{j=1}^{l_{0,1}},m_{l_{0,1},l_{0,2}},m_{l_{0,2}})} \rightarrow {\mathbb{R}}^2$. For the Reeb graph $W_{f_{0,(D_0, \{S_{0,j}\}_{j=1}^{l_{0,1}},m_{l_{0,1},l_{0,2}},m_{l_{0,2}})}}$ of the function $f_{0,(D_0, \{S_{0,j}\}_{j=1}^{l_{0,1}},m_{l_{0,1},l_{0,2}},m_{l_{0,2}})}$, let $n_e$ denote the number of the edges of $W_{f_{0,(D_0, \{S_{0,j}\}_{j=1}^{l_{0,1}},m_{l_{0,1},l_{0,2}},m_{l_{0,2}})}}$ and $\{e_j\}_{j=1}^{n_e}$ the family of all edges of the graph.
Here let  $m_{l_{0,2}}$ denote not only the function but also the value $m_{l_{0,2}}(1)${\rm :} note that the function $m_{l_{0,2}}$ is constant here. 

Then, for each map $l_{\{e_j\}_{j=1}^{n_e}}:{\mathbb{N}}_{n_e} \rightarrow {\mathbb{N}} \sqcup \{0\}$ and an arbitrary integer $m_{l_{\{e_j\}_{j=1}^{n_e}}}>m_{l_{0,2}}+2$, there exist a non-singular real algebraic manifold $M_{l_{\{e_j\}_{j=1}^{n_e}}}$ which is also a closed and connected manifold, which is the zero set of a real polynomial map, and whose dimension is $m_{l_{\{e_j\}_{j=1}^{n_e}}}$, and a smooth real algebraic map $f_{l_{\{e_j\}_{j=1}^{n_e}}}:M_{l_{\{e_j\}_{j=1}^{n_e}}} \rightarrow {\mathbb{R}}^2$ with the following properties. 
\begin{enumerate}
\item \label{mthm:2.1} Each map $f:=f_{l_{\{e_j\}_{j=1}^{n_e}}}$ is  the moment-like map reconstructed from some data $(D,\{S_j\}_{j=1}^{l_1},m_{l_1,l_2},m_{l_2}):=(D_{l_{\{e_j\}_{j=1}^{n_e}}},\{S_{l_{\{e_j\}_{j=1}^{n_e}},j}\}_{j=1}^{l_1},m_{l_{\{e_j\}_{j=1}^{n_e}},l_1,l_2},m_{l_{\{e_j\}_{j=1}^{n_e}},l_2})$ by choosing each $S_{l_{\{e_j\}_{j=1}^{n_e}},j}$. The following conditions are satisfied.
\begin{enumerate}
\item The non-empty open set $D:=D_{l_{\{e_j\}_{j=1}^{n_e}}}$ is a connected proper subset of $D_0$.
The intersection $(\overline{D}-D) \bigcap S_j=(\overline{D_{l_{\{e_j\}_{j=1}^{n_e}}}}-D_{l_{\{e_j\}_{j=1}^{n_e}}}) \bigcap S_j$ is non-empty for each $1 \leq j \leq l_{1,1}$.
\item We define the two integers $l_1:=l_{0,1}+{\Sigma}_{j^{\prime}=1}^{n_e} l_{\{e_j\}_{j=1}^{n_e}}(j^{\prime})$ and $l_2:=l_{0,2}+1=1+1=2$.
\item Each $S_j$ is a circle.  For $1 \leq j \leq l_{0,1}$, $S_j:=S_{j,0}$.
\item Distinct circles in the family $\{S_{l_{0,1}+j}\}_{j=1}^{l_1-l_{0,1}}$ are disjoint. For each circle $S_{l_{0,1}+j_2}$ {\rm (}$1 \leq j_2 \leq l_1-l_{0,1}${\rm )} in this family, there exists a unique circle $S_{j_1,0}$ in the family $\{S_{j,0}\}_{j=1}^{l_{0,1}}$ such that the intersection $S_{j_1,0} \bigcap S_{l_{0,1}+j_2}$ is not empty. Furthermore, the intersection is a discrete two-point set. 
\end{enumerate}
Hereafter, let $f_{0,l_{\{e_j\}_{j=1}^{n_e}}}:={\pi}_{2,1} \circ f_{l_{\{e_j\}_{j=1}^{n_e}}}$.

\item \label{mthm:2.2} Each function $f_{0,l_{\{e_j\}_{j=1}^{n_e}}}$ is a Morse-Bott function.
\item \label{mthm:2.3} Each Reeb graph $W_{f_{0,l_{\{e_j\}_{j=1}^{n_e}}}}$ collapses to the original Reeb graph $W_{f_{0,(D_0, \{S_{0,j}\}_{j=1}^{l_{0,1}},m_{l_{0,1},l_{0,2}},m_{l_{0,2}})}}$. 
\item \label{mthm:2.4}
For any two distinct cases such that the values of the maps $l_{\{e_j\}_{j=1}^{n_e}}:=l_{\{e_j\}_{j=1}^{n_e},1}$ and $l_{\{e_j\}_{j=1}^{n_e}}:=l_{\{e_j\}_{j=1}^{n_e},2}$ are same except at exactly one number in ${\mathbb{N}}_{n_e}$, the Reeb graphs $W_{f_{0,l_{\{e_j\}_{j=1}^{n_e},1}}}$ and $W_{f_{0,l_{\{e_j\}_{j=1}^{n_e},2}}}$ are not isomorphic.

\end{enumerate}
\end{MainThm}
\begin{proof}

We choose circles $S_j$ for our proof.
We can choose our circles enjoying the following properties.
\begin{enumerate}\setcounter{enumi}{4}
\item \label{mthm:2.5} For $1 \leq j \leq l_{0,1}$, $S_j:=S_{j,0}$.
\item \label{mthm:2.6} Circles in the family $\{S_{l_{0,1}+j}\}_{j=1}^{l_1-l_{0,1}}$ are centered at points in some circles in the family $\{S_{j,0}\}_{j=1}^{l_{0,1}}$ which are not vertical or horizontal poles of the circles. These circles are sufficiently small and mutually disjoint.
We remember the map $f_{(D_0, \{S_{0,j}\}_{j=1}^{l_{0,1}},m_{l_{0,1},l_{0,2}},m_{l_{0,2}})}$, the function $f_{0,(D_0, \{S_{0,j}\}_{j=1}^{l_{0,1}},m_{l_{0,1},l_{0,2}},m_{l_{0,2}})}$, and the quotient map $q_{f_{0,(D_0, \{S_{0,j}\}_{j=1}^{l_{0,1}},m_{l_{0,1},l_{0,2}},m_{l_{0,2}})}}$. We take the preimage $P_{e_{i}}:={q_{f_{0,(D_0, \{S_{0,j}\}_{j=1}^{l_{0,1}},m_{l_{0,1},l_{0,2}},m_{l_{0,2}})}}}^{-1}(e_{i}) \subset M_{(D_0, \{S_{0,j}\}_{j=1}^{l_{0,1}},m_{l_{0,1},l_{0,2}},m_{l_{0,2}})}$, the image $f_{(D_0, \{S_{0,j}\}_{j=1}^{l_{0,1}},m_{l_{0,1},l_{0,2}},m_{l_{0,2}})}(P_{e_i})$ and the intersection \\ $f_{(D_0, \{S_{0,j}\}_{j=1}^{l_{0,1}},m_{l_{0,1},l_{0,2}},m_{l_{0,2}})}(P_{e_i}) \bigcap {\sqcup}_{j=1}^{l_{0,1}} S_j$. 
Remember also Proposition \ref{prop:1} and that the function $f_{0,(D_0, \{S_{0,j}\}_{j=1}^{l_{0,1}},m_{l_{0,1},l_{0,2}},m_{l_{0,2}})}$ is a Morse function for example.
The intersection $f_{(D_0, \{S_{0,j}\}_{j=1}^{l_{0,1}},m_{l_{0,1},l_{0,2}},m_{l_{0,2}})}(P_{e_i}) \bigcap {\sqcup}_{j=1}^{l_{0,1}} S_j$ is not empty and it is a disjoint union of two curves diffeomorphic to $\mathbb{R}$. We choose one connected curve $L_i$ from these curves.
The circle $S_{l_{0,1}+{\Sigma}_{j^{\prime}=1}^{i-1} l_{\{e_j\}_{j=1}^{n_e}}(j^{\prime})+j^{\prime \prime}}$ is centered at a point in $L_i$ for $1 \leq j^{\prime \prime} \leq l_{\{e_j\}_{j=1}^{n_e}}(i)$. Let ${\mathbb{R}^2}_{S_{l_{0,1}+j}}$ be the unbounded connected component of the complementary set ${\mathbb{R}}^2-S_{l_{0,1}+j}$ for $1 \leq j \leq l_1-l_{0,1}$. We define $D=D_{l_{\{e_j\}_{j=1}^{n_e}}}:=D_0 \bigcap {\bigcap}_{j=1}^{l_1-l_{0,1}} {\mathbb{R}^2}_{S_{l_{0,1}+j}}$.
\item \label{mthm:2.7} Each circle $S_{l_{0,1}+j}$ in $\{S_{l_{0,1}+j}\}_{j=1}^{l_1-l_{0,1}}$ contains exactly one vertical pole $v_{S_{l_{0,1}+j}}$ of the circle $S_{l_{0,1}+j}$ satisfying $v_{S_{l_{0,1}+j}} \in \overline{D}$ and two points $p_{S_{l_{0,1}+j},1}$ and $p_{S_{l_{0,1}+j},2}$ contained in exactly two distinct circles in $\{S_j\}_{j=1}^{l_1}$ and $\overline{D}$. These three points are mutually distinct and the values of the first components are also mutually distinct. 

We explain this by simple figures (FIGUREs \ref{fig:1} and \ref{fig:2}). We need to consider exactly four cases for the local curvature of the circles from $\{S_j\}_{j=1}^{l_{0,1}}$. However, by virtue of the symmetry, it is sufficient to consider one case.   
\begin{figure}
	
	\includegraphics[height=75mm, width=100mm]{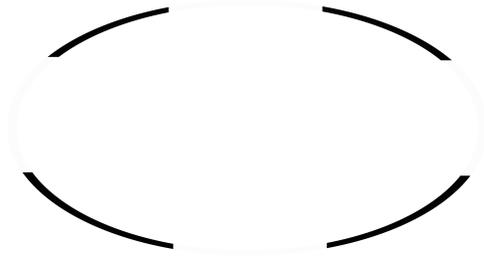}

	\caption{Four cases for the local curvature of a circle.}
	\label{fig:1}
\end{figure}
\begin{figure}
	
	\includegraphics[height=75mm, width=100mm]{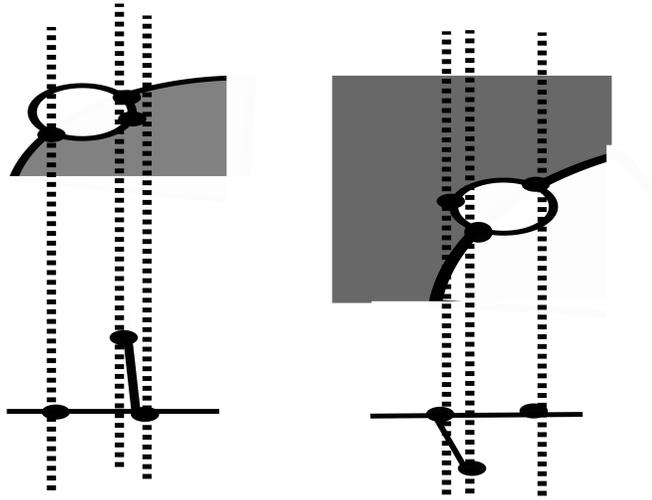}

	\caption{Local intersections of our circles. We also show our Reeb graphs locally. We consider one case.}
	\label{fig:2}
\end{figure}

For exactly two circles containing the two points here, one is from the family $\{S_j\}_{j=1}^{l_{0,1}}$ and the other is from the family $\{S_{l_{0,1}+j}\}_{j=1}^{l_1-l_{0,1}}$.
For distinct circles $S_{l_{0,1}+j_1}$ and $S_{l_{0,1}+j_2}$ from $\{S_{l_{0,1}+j}\}_{j=1}^{l_1-l_{0,1}}$, either of the relation $$\max \{{\pi}_{2,1}(v_{S_{l_{0,1}+j_1}}),{\pi}_{2,1}(p_{S_{l_{0,1}+j_1},1}), {\pi}_{2,1}(p_{S_{l_{0,1}+j_1},2})\}<\min \{{\pi}_{2,1}(v_{S_{l_{0,1}+j_2}}),{\pi}_{2,1}(p_{S_{l_{0,1}+j_2},1}), {\pi}_{2,1}(p_{S_{l_{0,1}+j_2},2})\}$$ or the relation $$\max \{{\pi}_{2,1}(v_{S_{l_{0,1}+j_2}}),{\pi}_{2,1}(p_{S_{l_{0,1}+j_2},1}), {\pi}_{2,1}(p_{S_{l_{0,1}+j_2},2})\}<\min \{{\pi}_{2,1}(v_{S_{l_{0,1}+j_1}}),{\pi}_{2,1}(p_{S_{l_{0,1}+j_1},1}), {\pi}_{2,1}(p_{S_{l_{0,1}+j_1},2})\}$$ holds. Note that the map ${\pi}_{2,1}$ is used to represent the first components.
\item \label{mthm:2.8} Each circle $S_{j,0}$ in $\{S_{j,0}\}_{j=1}^{l_{0,1}}$ contains exactly two vertical poles $v_{S_{j,0},1}$ and $v_{S_{j,0},2}$ of it satisfying $v_{S_{j,0},1}, v_{S_{j,0},2} \in \overline{D}$. 
The values of the first components of these vertical poles are always distinct from those of points discussed in (\ref{mthm:2.7}): the vertical poles $v_{S_{l_{0,1}+j}}$ and points $p_{S_{l_{0,1}+j},1}$ and $p_{S_{l_{0,1}+j},2}$ contained in exactly two distinct circles in $\{S_j\}_{j=1}^{l_1}$.
\item \label{mthm:2.9} We can apply Main Theorem \ref{mthm:1} with Theorems \ref{thm:1} and \ref{thm:2} and our proof to have our new map $f_{l_{\{e_j\}_{j=1}^{n_e}}}:M_{l_{\{e_j\}_{j=1}^{n_e}}} \rightarrow {\mathbb{R}}^2$. More explicitly we set as follows. We put $l_2:=2$.

We set $m_{l_{\{e_j\}_{j=1}^{n_e}},l_1,l_2}$ as a function onto the set $\{1,2\}$ and $m_{l_{\{e_j\}_{j=1}^{n_e}},l_2}$ as a function with the following conditions.
\begin{enumerate}
	\item The restriction $m_{l_{\{e_j\}_{j=1}^{n_e}},l_1,l_2} {\mid}_{{\mathbb{N}}_{l_{0,1}}}$ and the function $m_{l_{0,1},l_{0,2}}$ agree where the sets of the targets are seen as $\{1,2\}$.
	\item The restriction $m_{l_{\{e_j\}_{j=1}^{n_e}},l_1,l_2} {\mid}_{{\mathbb{N}}_{l_{0,1}}}$ is a constant function whose values are always $1$. The restriction $m_{l_{\{e_j\}_{j=1}^{n_e}},l_1,l_2} {\mid}_{{\mathbb{N}}_{l_1}-{\mathbb{N}}_{l_{0,1}}}$ is a constant function whose values are always $2$.
	\item We put $m_{l_{\{e_j\}_{j=1}^{n_e}},l_2}(1)=m_{l_{0,2}}:=m_{l_{0,2}}(1)$.
	We put $m_{l_{\{e_j\}_{j=1}^{n_e}},l_2}(2):=m_{l_{\{e_j\}_{j=1}^{n_e}}}-(m_{l_{0,2}}+2)>0$.
\end{enumerate}

\end{enumerate}

We construct our map $f_{l_{\{e_j\}_{j=1}^{n_e}}}:M_{l_{\{e_j\}_{j=1}^{n_e}}} \rightarrow {\mathbb{R}}^2$ in (\ref{mthm:2.9}). We show that this map is our desired map. 

We can check the property (\ref{mthm:2.1}) here easily from our situation. 

We show the property (\ref{mthm:2.2}). This is on the singularities of the functions. 
We discuss local singularities of the function $f_{0,l_{\{e_j\}_{j=1}^{n_e}}}$. 

For $p \in \overline{D}$, the preimage ${f_{l_{\{e_j\}_{j=1}^{n_e}}}}^{-1}(p)$ contains some singular points of the function $f_{0,l_{\{e_j\}_{j=1}^{n_e}}}$ only if it is a vertical pole $v$ of some circle $S_j$ satisfying $v \in \overline{D}$ or a point contained in exactly two disjoint circles in $\{S_j\}_{j=1}^{l_1}$ and $\overline{D}$. We can see this fact on singularities from the structures of the functions and the maps easily.

We remember our proof of Proposition \ref{prop:1} and related arguments on singularities of the function at vertical poles. These singular points are for singular points of Morse-Bott functions.

We present an argument similar to "our argument for local structures of the map $f$ around the zero set of $f_j$ in the proof of Proposition \ref{prop:1}". This is a higher dimensional version for a non-empty set represented as $S_{j_{0,1}} \bigcap S_{j_{0,2}}$ ($j_{0,1} \neq j_{0,2}$). 

Around each point of the discrete set $S_{j_{0,1}} \bigcap S_{j_{0,2}}$, our map $f:M \rightarrow {\mathbb{R}}^2$ is represented as the product map of a natural Morse function on the disk $D^{m_2 \circ m_{l_1,l_2}(j_{0,1})+1}$, a natural Morse function on the disk $D^{m_2 \circ m_{l_1,l_2}(j_{0,2})+1}$ and the identity map on a sphere diffeomorphic to the product ${\prod}_{j \in {\mathbb{N}_{l_2}}-\{m_{l_1,l_2}(j_{0,1}),m_{l_1,l_2}(j_{0,2})\}} S^{m_2(j)}$ in the case $l_2>2$ and the product map of a natural Morse function on the disk $D^{m_2 \circ m_{l_1,l_2}(j_{0,1})+1}$ and a natural Morse function on the disk $D^{m_2 \circ m_{l_1,l_2}(j_{0,2})+1}$ in the case $l_2=2$. Remember that here $l_2=2$. We have discussed a general case for the singularities.

The identity map on the set ${\prod}_{j \in {\mathbb{N}_{l_2}}-\{m_{l_1,l_2}(j_{0,1}),m_{l_1,l_2}(j_{0,2})\}} S^{m_2(j)}$ comes from a natural map mapping $(x,y) \in M \subset \overline{D} \times {\mathbb{R}}^{m-n+l_2}=\overline{D} \times {\mathbb{R}}^{m_2 \circ m_{l_1,l_2}(j_{0,1})+1} \times {\mathbb{R}}^{m_2 \circ m_{l_1,l_2}(j_{0,2})+1} \times {\mathbb{R}}^{m-n+l_2-m_2 \circ m_{l_1,l_2}(j_{0,1})-m_2 \circ m_{l_1,l_2}(j_{0,2})-2} \subset {\mathbb{R}}^{m+l_2}$ to an element of $\overline{D} \times {\mathbb{R}}^{m_2 \circ m_{l_1,l_2}(j_{0,1})+1} \times {\mathbb{R}}^{m_2 \circ m_{l_1,l_2}(j_{0,2})+1} \times {\prod}_{j \in {\mathbb{N}_{l_2}}-\{m_{l_1,l_2}(j_{0,1}), m_{l_1,l_2}(j_{0,2})\}} S^{m_2(j)}$
around the discrete set.

We present the natural map explicitly.
We remember the notation $y=(y_1,\cdots y_{l_2})$. For $i \in {\mathbb{N}_{l_2}}-\{m_{l_1,l_2}(j_{0,1}),m_{l_1,l_2}(j_{0,2})\}$, the component $y_i$ is mapped onto the component $\frac{1}{\sqrt{{\prod}_{j \in {m_{l_1,l_2}}^{-1}(i)} (f_{j}(x_1,x_2))}} y_i \in S^{m_2(i)}$ at $(x,y)$. The component $x$ is mapped to $x$ itself and the component $y_{m_{l_1,l_2}(j_{0,a})}$ is mapped to $y_{m_{l_1,l_2}(j_{0,a})}$ itself for $a=1,2$. This natural map also gives a diffeomorphism around the discrete set $S_{j_{0,1}} \bigcap S_{j_{0,2}}$ and its preimage by the map $f$.

We respect these arguments and local coordinates of the functions and the maps. We also remember the properties, mainly, (\ref{mthm:2.6}, \ref{mthm:2.7}, \ref{mthm:2.8}, \ref{mthm:2.9}). Here, we also respect these properties, implying that the location of our circles $S_j$ is sufficiently general.
This completes the proof of the property (\ref{mthm:2.2}).

We show the properties (\ref{mthm:2.3}, \ref{mthm:2.4}). We mainly remember the properties (\ref{mthm:2.6}, \ref{mthm:2.7}, \ref{mthm:2.8}, \ref{mthm:2.9}) again.
We investigate the preimage ${f_{0,l_{\{e_j\}_{j=1}^{n_e}}}}^{-1}(t)$ containing no singular points of the function $f_{0,l_{\{e_j\}_{j=1}^{n_e}}}$. We investigate the preimage ${{\pi}_{2,1}}^{-1}(t)$, which is the straight line $L_t \subset {\mathbb{R}}^2$ represented as the zero set of the real polynomial $x_1-t$ ($(x_1,x_2) \in {\mathbb{R}}^2$). We investigate each connected component $L_{e,t}$ of the intersection of the straight line $L_t$ and the image $f_{l_{\{e_j\}_{j=1}^{n_e}}}(M_{l_{\{e_j\}_{j=1}^{n_e}}})$. The interior ${\rm Int}\ L_{e,t}$ is in the interior ${\rm Int}\ f_{l_{\{e_j\}_{j=1}^{n_e}}}(M_{l_{\{e_j\}_{j=1}^{n_e}}})$ of the image and the boundary consists of exactly two points $p_{e,1}$ and $p_{e,2}$ contained in some circles $S_j$ which are mutually distinct. On this, either of the following holds.

\begin{itemize}
\item These points $p_{e,1}$ and $p_{e,2}$ are in distinct circles represented as $S_{0,j}=S_j$ ($1 \leq j \leq l_{0,1}$). The preimage ${f_{l_{\{e_j\}_{j=1}^{n_e}}}}^{-1}(L_{e,t})$ is diffeomorphic to $S^{m_{l_{\{e_j\}_{j=1}^{n_e}},l_2}(1)+1} \times S^{m_{l_{\{e_j\}_{j=1}^{n_e}},l_2}(2)}$. This is also regarded as a manifold diffeomorphic to one obtained by gluing two copies of $D^{m_{l_{\{e_j\}_{j=1}^{n_e}},l_2}(1)+1} \times S^{m_{l_{\{e_j\}_{j=1}^{n_e}},l_2}(2)}$ along the boundaries  in a canonical way. In other words, we take a double of the copy here.
\item One of the points is in a circle represented as $S_{0,j}=S_j$ ($1 \leq j \leq l_{0,1}$) and the other point is in a circle represented as $S_{j}$ ($j>l_{0,1}$). The preimage ${f_{l_{\{e_j\}_{j=1}^{n_e}}}}^{-1}(L_{e,t})$ is diffeomorphic to $S^{m_{l_{\{e_j\}_{j=1}^{n_e}}}-1}$. This is also regarded as a manifold diffeomorphic to one obtained by gluing $D^{m_{l_{\{e_j\}_{j=1}^{n_e}},l_2}(1)+1} \times \partial D^{m_{l_{\{e_j\}_{j=1}^{n_e}},l_2}(2)+1}$ and $\partial (D^{m_{l_{\{e_j\}_{j=1}^{n_e}},l_2}(1)+1}) \times D^{m_{l_{\{e_j\}_{j=1}^{n_e}},l_2}(2)+1}$ along the boundaries in a canonical way.
\end{itemize}
 
 From our situation, we cannot observe the case that these points $p_{e,1}$ and $p_{e,2}$ are in distinct circles represented as $S_{l_{0,1}+j}$ ($1 \leq j \leq l_1-l_{0,1}$).
 
These preimages are also connected components of the preimage ${f_{0,l_{\{e_j\}_{j=1}^{n_e}}}}^{-1}(t)$. See also Remark \ref{rem:1} for a kind of related counterexamples.

We can easily check the property (\ref{mthm:2.3}).

The property (\ref{mthm:2.4}) follows from the difference of numbers of vertices. 

By our construction and arguments, our $M_{l_{\{e_j\}_{j=1}^{n_e}}}$ is connected. Finally, we also show that our $M_{l_{\{e_j\}_{j=1}^{n_e}}}$ is not only a connected component of the zero set of a real polynomial map, but also the zero set. This follows from the fact that the set "$U_D$ is chosen as ${\mathbb{R}}^n$ in Main Theorem \ref{mthm:1}" and the construction in its proof, for example. In Main Theorem \ref{mthm:3}, presented later, we can argue this in the same way.

This completes the proof.
\end{proof}

FIGUREs \ref{fig:3} and \ref{fig:4} show an explicit case for Main Theorem \ref{mthm:2} with $n=2$ and $(l_{0,1},l_{0,2},l_1,l_2,n_e)=(3,1,5,2,5)$.
\begin{figure}
	
	\includegraphics[height=75mm, width=100mm]{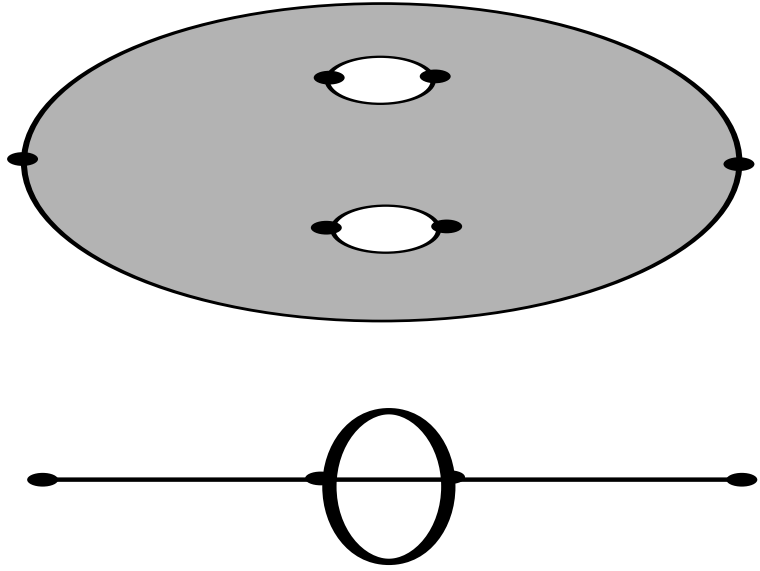}

	\caption{An example for Main Theorem \ref{mthm:2}. The image of a moment-like map $f_{(D_0,\{S_{0,j}\}_{j=1}^{l_{0,1}},m_{l_{0,1},l_{0,2}},m_{l_{0,2}})}$ into ${\mathbb{R}}^2$ for a region $D_0$ surrounded by circles and colored in gray with $n=2$ and $(l_{0,1}.l_{0,2})=(3,1)$. The Reeb graph of the function $f_{0,(D_0,\{S_{0,j}\}_{j=1}^{l_{0,1}},m_{l_{0,1},l_{0,2}},m_{l_{0,2}})}:={\pi}_{2,1} \circ f_{(D_0,\{S_j\}_{j=1}^{l_{0,1}},m_{l_{0,1},l_{0,2}},m_{l_{0,2}})}$. Black dots are for singular points of the function.}
	\label{fig:3}
\end{figure}
\begin{figure}
	
	\includegraphics[height=75mm, width=100mm]{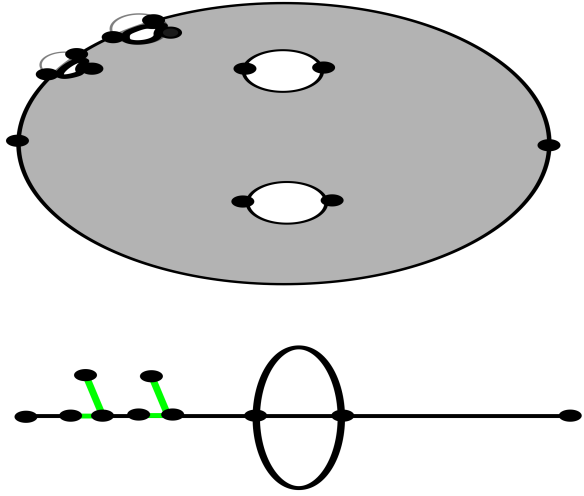}

	\caption{An example for Main Theorem \ref{mthm:2} following FIGURE \ref{fig:3}. 
The image of a moment-like map $f_{(D_{l_{\{e_j\}_{j=1}^{n_e}}},\{S_{l_{\{e_j\}_{j=1}^{n_e}},j}\}_{j=1}^{l_1},m_{l_{\{e_j\}_{j=1}^{n_e}},l_1,l_2},m_{l_{\{e_j\}_{j=1}^{n_e}},l_2})}$ into ${\mathbb{R}}^2$ for a region $D_{l_{\{e_j\}_{j=1}^{n_e}}}$ surrounded by circles and colored in gray: here FIGURE \ref{fig:1} is respected with $(l_{0,1},l_{0,2})=(3,1)$, $l_1=5$, $l_2=2$, $n_e=5$, and $l_{\{e_j\}_{j=1}^{n_e}}$ is a function whose value is $2$ at an edge and whose values are $0$ at the remaining edges. The Reeb graph of the function $f_{0,(D_{l_{\{e_j\}_{j=1}^{n_e}}},\{S_{l_{\{e_j\}_{j=1}^{n_e}},j}\}_{j=1}^{l_1},m_{l_{\{e_j\}_{j=1}^{n_e}},l_1,l_2},m_{l_{\{e_j\}_{j=1}^{n_e}},l_2})}:={\pi}_{2,1} \circ f_{(D_{l_{\{e_j\}_{j=1}^{n_e}}},\{S_{l_{\{e_j\}_{j=1}^{n_e}},j}\}_{j=1}^{l_1},m_{l_{\{e_j\}_{j=1}^{n_e}},l_1,l_2},m_{l_{\{e_j\}_{j=1}^{n_e}},l_2})}$. Black dots are for singular points of the function. For each point in the interior of each black (green) edge, the preimage is diffeomorphic to $S^{m_{l_{\{e_j\}_{j=1}^{n_e}},l_2}(1)+1} \times S^{m_{l_{\{e_j\}_{j=1}^{n_e}},l_2}(2)}$ (resp. $S^{m_{l_{\{e_j\}_{j=1}^{n_e}}}-1}$).}
	\label{fig:4}
\end{figure}

We can also show the following by applying several arguments in our proof of Main Theorem \ref{mthm:2} for example.

\begin{MainThm}
\label{mthm:3}
Suppose that a situation as in Proposition \ref{prop:1}, in Main Theorem \ref{mthm:1} and Theorems \ref{thm:1} and \ref{thm:2} is given as follows. Let $n=2$, $l_2:=l_{1,2} \geq 1$, suppose that the family $\{S_j:=S_{1,j}\}_{j=1}^{l_{1,1}} \subset {\mathbb{R}}^2$ {\rm (}$l_1:=l_{1,1}${\rm )} is a family of circles which may intersect mutually, that the relation $\overline{D_1}-D_1 \subset {\bigcup}_{j=1}^{l_{1,1}} S_j$ {\rm (}$D:=D_1${\rm )} holds and that the intersection $(\overline{D_1}-D_1) \bigcap S_j$ is non-empty for each $1 \leq j \leq l_{1,1}$. We also assume that circles in the family do not contain points which are vertical or horizontal poles of other circles in the family. In addition the functions $m_{l_{1,1},l_{1,2}}$ and $m_{l_{1,2}}$ are given in such a way that we can define the moment-like map $f=f_{(D, \{S_{j}\}_{j=1}^{l_1},m_{l_1,l_2},m_{l_2})}:=f_{(D_1, \{S_{1,j}\}_{j=1}^{l_{1,1}},m_{l_{1,1},l_{1,2}},m_{l_{1,2}})}:M_{(D, \{S_{j}\}_{j=1}^{l_1},m_{l_1,l_2},m_{l_2})}:=M_{(D_1, \{S_{1,j}\}_{j=1}^{l_{1,1}},m_{l_{1,1},l_{1,2}},m_{l_{1,2}})} \rightarrow {\mathbb{R}}^2$. As an additional condition, we also define the function $m_{l_{1,2}}$ as a function whose values are always positive integers.

Furthermore, for the Reeb graph $W_{f_{0,(D_1, \{S_{1,j}\}_{j=1}^{l_{1,1}},m_{l_{1,1},l_{1,2}},m_{l_{1,2}})}}$ of the function $f_{0,(D_1, \{S_{1,j}\}_{j=1}^{l_{1,1}},m_{l_{1,1},l_{1,2}},m_{l_{1,2}})}$, let $n_e$ denote the number of the edges of $W_{f_{0,(D_1, \{S_{1,j}\}_{j=1}^{l_{1,1}},m_{l_{1,1},l_{1,2}},m_{l_{1,2}})}}$ and $\{e_j\}_{j=1}^{n_e}$ the family of all edges of the graph.

Then, for each edge $e_i$, any function $m_{e_i,l_{1,2}}:{\mathbb{N}}_{l_{1,2+1}} \rightarrow \mathbb{N}$ whose values are always positive and whose restriction to ${\mathbb{N}}_{l_{1,2}}$ is the original function $m_{l_{1,2}}$, and the integer $m_{e_i}:={\Sigma}_{j=1}^{l_{1,2}+1} (m_{e_i,l_{1,2}}(j))+2$, there exist a non-singular real algebraic manifold $M_{e_i}$ which is also a closed and connected manifold, which is the zero set of a real polynomial map, and whose dimension is $m_{e_i}$, and a smooth real algebraic map $f_{e_i}:M_{e_i} \rightarrow {\mathbb{R}}^2$ with the following properties. 
\begin{enumerate}
\item \label{mthm:3.1} The map $f:=f_{e_i}$ is the moment-like map reconstructed from some data $(D,\{S_j\}_{j=1}^{l_1},m_{l_1,l_2},m_{l_2}):=(D_{e_i},\{S_{e_i,j}\}_{j=1}^{l_1},m_{e_i,l_1,l_2},m_{e_i,l_{2}})$ by choosing each $S_{e_i,j}$ suitably. The following conditions are satisfied.
\begin{enumerate}

\item The non-empty open set $D:=D_{e_i}$ is a connected proper subset of $D_{1}$. The intersection $(\overline{D}-D) \bigcap S_j=(\overline{D_{e_i}}-D_{e_i}) \bigcap S_j$ is non-empty for each $1 \leq j \leq l_{1,1}$.
\item The circles are defined by $S_j:=S_{1,j}$ for $1 \leq j \leq l_{1,1}$.
\item The integers $l_1$ and $l_2$ are defined by $l_1:=l_{1,1}+1$ and  $l_2:=l_{1,2}+1$. 
\item The set $S_{l_1} \bigcap S_{1,j}$ is a non-empty set for a unique integer $1 \leq j \leq l_{1,1}$. The non-empty set is a discrete two-point set.
\end{enumerate}

\item \label{mthm:3.2} The function $f_{0,e_i}:={\pi}_{2,1} \circ f_{e_i}$ is a Morse-Bott function.
\item \label{mthm:3.3} The Reeb graph $W_{f_{0,e_i}}$ is isomorphic to the graph obtained in the following way.
\begin{enumerate}
\item We choose the edge $e_i$ of $W_{f_{0,(D_1, \{S_{1,j}\}_{j=1}^{l_{1,1}},m_{l_{1,1},l_{1,2}},m_{l_{1,2}})}}$.We add two distinct new vertices $v_{e_i,1}$ and $v_{e_i,2}$ in the interior of $e_i$.
\item We add an edge to the previous new graph which contains $v_{e_i,1}$ or $v_{e_i,2}$ as a vertex and another newly added vertex. This graph is our new graph collapsing to the original Reeb graph $W_{f_{0,(D_1, \{S_{1,j}\}_{j=1}^{l_{1,1}},m_{l_{1,1},l_{1,2}},m_{l_{1,2}})}}$. We remember that the number of edges is greater than that of the original Reeb graph by $3$ and that the number of vertices is greater than that of the original graph by $3$.
\end{enumerate} 
\end{enumerate}
\end{MainThm}
\begin{proof}[A sketch of a proof]
Most of important arguments is discussed in the proof of Main Theorem \ref{mthm:2}. We only present a sketch of a proof.

We can choose circles $S_{1,j}$ and the unique additional circle $S_{l_1}=S_{l_{1,1}+1}$ as in the proof of Main Theorem \ref{mthm:2}. 
We can abuse most of the notation. We can have our map $f=f_{e_i}:M_{e_i} \rightarrow {\mathbb{R}}^2$ like one presented through (\ref{mthm:2.5})--(\ref{mthm:2.9}).

We present some remarks on the definition of our map. The function $m_{e_i,l_{1},l_{2}}$ is defined in such a way that the restriction to ${\mathbb{N}}_{l_{1,1}}$ is same as $m_{l_{1,1},l_{1,2}}$ with the set of the target being ${\mathbb{N}}_{l_2}={\mathbb{N}}_{l_{1,2}+1}$ and that $m_{e_i,l_{1},l_{2}}(l_{1,1}+1)=l_{1,2}+1=l_2$. We remember that for the function $m_{e_i,l_{2}}$, the restriction to ${\mathbb{N}}_{l_{1,2}}$ and $m_{l_{1,2}}$ agree and that the value at $l_{1,2}+1$ is $m_{e_i,l_{2}}(l_{1,2}+1):=m_{e_i}-({\Sigma}_{j=1}^{l_{1,2}} (m_{e_i,l_{2}}(j))+2)>0$.
The map $l_{\{e_j\}_{j=1}^{n_e}}:{\mathbb{N}}_{n_e} \rightarrow {\mathbb{N}} \sqcup \{0\}$, discussed in our proof of Main Theorem \ref{mthm:2}, can be abused in a natural way. It is defined as a map whose values are $1$ at the edge $e_i$ and $0$ at the remaining edges, in our new situation.

We investigate the preimage ${f_{0,e_i}}^{-1}(t)$ containing no singular points of the function $f_{0,e_i}$.  We also abuse the notation here.
Similarly, we have $L_{e,t}$ as a connected component of the intersection of the image of $f_{e_i}$ and the zero set of the real polynomial $x_1-t=0$ in ${\mathbb{R}}^2$ ($(x_1,x_2) \in {\mathbb{R}}^2$). We also have two points $p_{e,1}$ and $p_{e,2}$ in the boundary of  $L_{e,t}$. Either of the following two holds.
\begin{itemize}
\item These points $p_{e,1}$ and $p_{e,2}$ are in distinct circles represented as $S_{j_1}$ and $S_{j_2}$ and the relation $m_{e_i,l_1,l_2}(j_1)=m_{e_i,l_1,l_2}(j_2)$ holds.
The preimage ${f_{e_i}}^{-1}(L_{e,t})$ is diffeomorphic to $S^{m_{e_i,l_2}(m_{e_i,l_1,l_2}(j_1))+1} \times {\prod}_{j \in {\mathbb{N}}_{l_2}-\{m_{e_i,l_1,l_2}(j_1)\}} S^{m_{e_i,l_2}(m_{e_i,l_1,l_2}(j))}=S^{m_{e_i,l_2}(m_{e_i,l_1,l_2}(j_2))+1} \times {\prod}_{j \in {\mathbb{N}}_{l_2}-\{m_{e_i,l_1,l_2}(j_2)\}} S^{m_{e_i,l_2}(m_{e_i,l_1,l_2}(j))}$. Here "$S^{m_{e_i,l_2}(m_{e_i,l_1,l_2}(j_1))+1}=S^{m_{e_i,l_2}(m_{e_i,l_1,l_2}(j_2))+1}$" is regarded as a manifold diffeomorphic to a double of two copies of $D^{m_{e_i,l_2}(m_{e_i,l_1,l_2}(j_1))+1}=D^{m_{e_i,l_2}(m_{e_i,l_1,l_2}(j_2))+1}$.
\item These points $p_{e,1}$ and $p_{e,2}$ are in distinct circles represented as $S_{j_1}$ and $S_{j_2}$ 
and the relation $m_{e_i,l_1,l_2}(m_{e_i,l_1,l_2}(j_1)) \neq m_{e_i,l_1,l_2}(m_{e_i,l_1,l_2}(j_2))$ holds.
The preimage ${f_{e_i}}^{-1}(L_{e,t})$ is diffeomorphic to $S^{m_{e_i,l_2}(m_{e_i,l_1,l_2}(j_1))+m_{e_i,l_2}(m_{e_i,l_1,l_2}(j_2))+1} \times  {\prod}_{j \in {\mathbb{N}}_{l_2}-\{m_{e_i,l_1,l_2}(j_1),m_{e_i,l_1,l_2}(j_2)\}} S^{m_{e_i,l_2}(m_{e_i,l_1,l_2}(j))}$.  Here "$S^{m_{e_i,l_2}(m_{e_i,l_1,l_2}(j_1))+m_{e_i,l_2}(m_{e_i,l_1,l_2}(j_2))+1}$" is regarded as a manifold diffeomorphic to a manifold obtained by gluing two manifolds $D^{m_{e_i,l_2}(m_{e_i,l_1,l_2}(j_1))+1} \times \partial D^{m_{e_i,l_2}(m_{e_i,l_1,l_2}(j_2))+1}$ and $\partial (D^{m_{e_i,l_2}(m_{e_i,l_1,l_2}(j_1))+1}) \times D^{m_{e_i,l_2}(m_{e_i,l_1,l_2}(j_2))+1}$  along the boundaries in a canonical way.

\end{itemize}
 
These preimages are also connected components of the preimage ${f_{0,e_i}}^{-1}(t)$.

We can check that all properties are enjoyed for our map. 
\end{proof}
FIGUREs \ref{fig:5} and \ref{fig:6} show an explicit case for Main Theorem \ref{mthm:3}.
\begin{figure}
	
	\includegraphics[height=75mm, width=100mm]{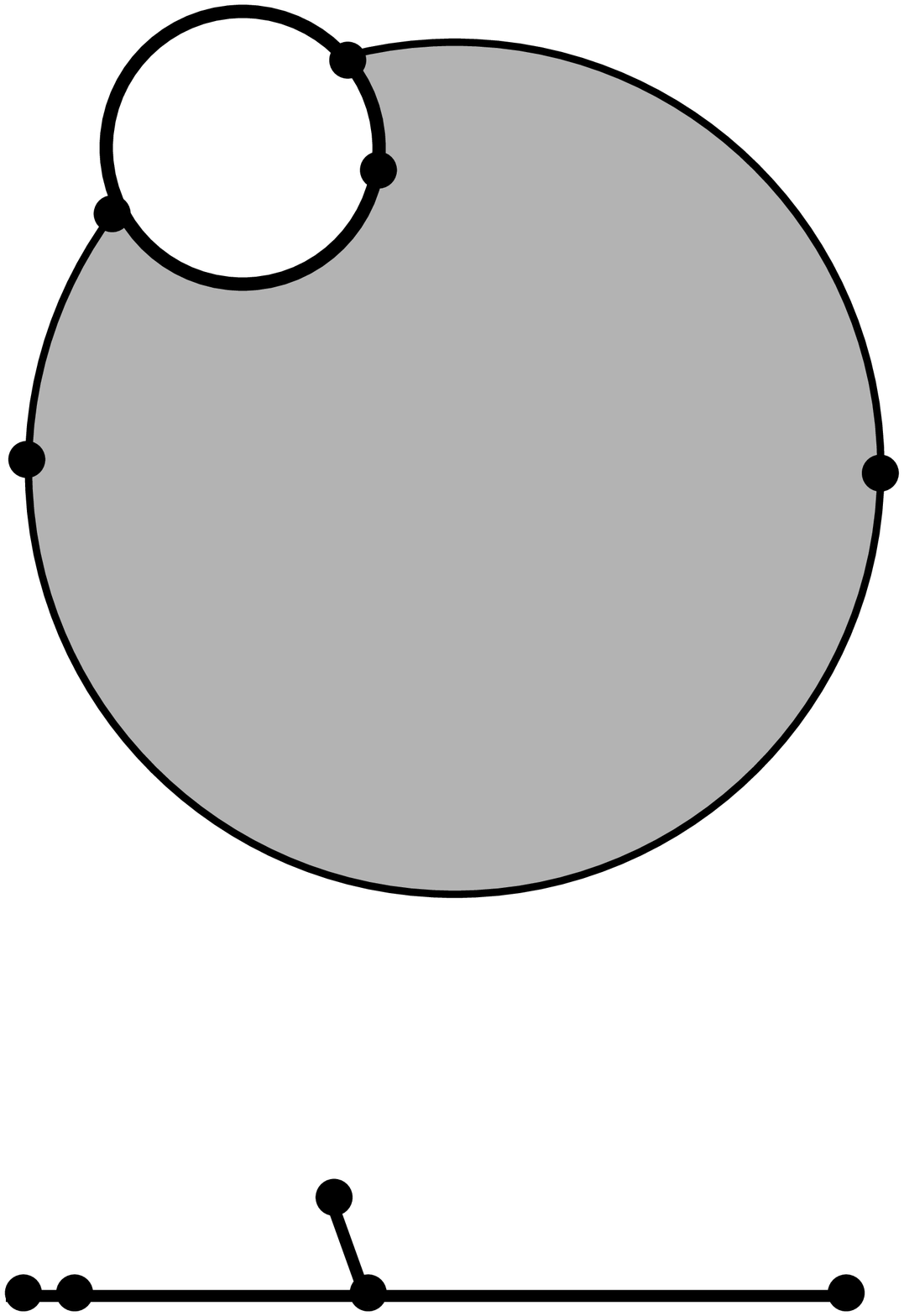}

	\caption{An example for Main Theorem \ref{mthm:3}. The image of a moment-like map $f_{(D_1,\{S_{1,j}\}_{j=1}^{l_{1,1}},m_{l_{1,1},l_{1,2}},m_{l_{1,2}})}$ into ${\mathbb{R}}^2$ for a region $D_1$ surrounded by circles and colored in gray with $n=2$ and $l_{1,1}=l_{1,2}=2$. The Reeb graph of the function $f_{0,(D_1,\{S_{1,j}\}_{j=1}^{l_{1,1}},m_{l_{1,1},l_{1,2}},m_{l_{1,2}})}:={\pi}_{2,1} \circ f_{(D_1,\{S_{1,j}\}_{j=1}^{l_{1,1}},m_{l_{1,1},l_{1,2}},m_{l_{1,2}})}$. Black dots are for singular points of the function.}
	\label{fig:5}
\end{figure}
\begin{figure}
	
	\includegraphics[height=75mm, width=100mm]{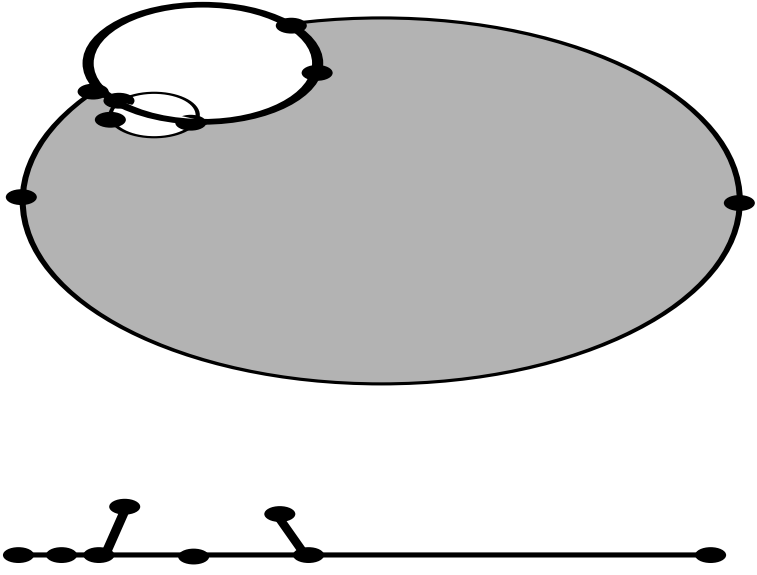}

	\caption{An example for Main Theorem \ref{mthm:3} following FIGURE \ref{fig:5}. The image of a moment-like map $f_{e_i}=f_{(D_{e_i},\{S_{e_i,j}\}_{j=1}^{l_1},m_{e_i,l_1,l_2},m_{e_i,l_2})}$ into ${\mathbb{R}}^2$ for a region $D=D_{e_i}$ surrounded by circles and colored in gray: here FIGURE \ref{fig:3} is respected and $(l_{1,1},l_1,l_2)=(2,3,l_{1,2}+1)$ ($l_{1,2}=2$). The Reeb graph of the function $f_{0,e_i}=f_{0,(D_{e_i},\{S_{e_i,j}\}_{j=1}^{l_1},m_{e_i,l_1,l_2},m_{e_i,l_2})}:={\pi}_{2,1} \circ f_{(D,\{S_j\}_{j=1}^{l_1},m_{e_i,l_1,l_2},m_{e_i,l_2})}$. Black dots are for singular points of the function.}
	\label{fig:6}
\end{figure}

	


\begin{Rem}
\label{rem:1}
FIGURE \ref{fig:7} shows an important example related to Main Theorem \ref{mthm:2}. This is a kind of counterexamples.
This is same as the case of FIGUREs \ref{fig:3} and \ref{fig:4} except the condition $m_{l_{\{e_j\}_{j=1}^{n_e}},l_2}(2)=0$.
This implies that we cannot show Main Theorem \ref{mthm:2} (\ref{mthm:2.3}) or Main Theorem \ref{mthm:3} (\ref{mthm:3.3}) under more general situations. For example, in the case the values of the function $m_{l_2}$ at some integers may be $0$. We omit examples on Main Theorem \ref{mthm:3}. 
\end{Rem}
\begin{figure}
	
	\includegraphics[height=75mm, width=100mm]{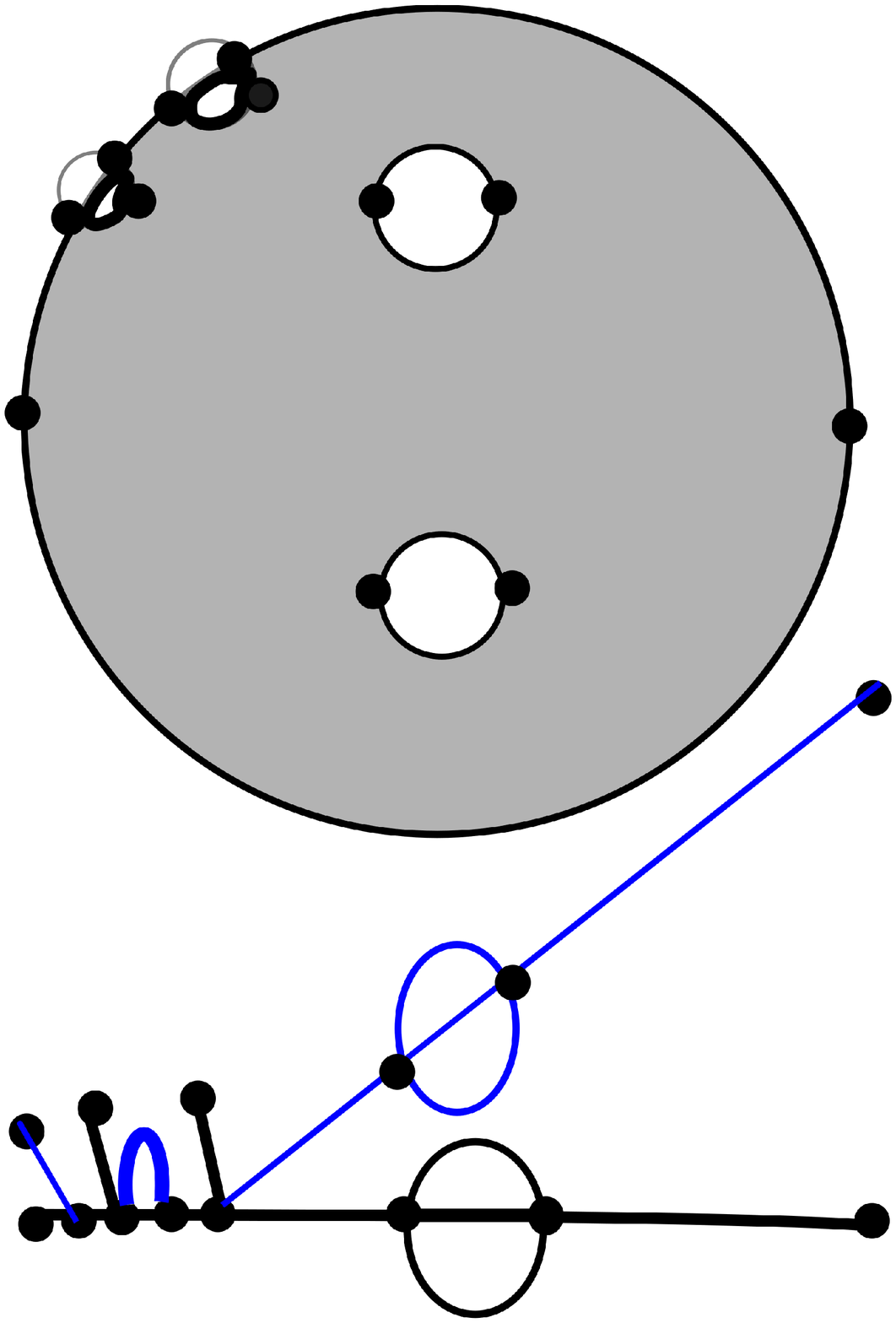}

	\caption{An example for Remark \ref{rem:1}. This follows FIGURE \ref{fig:3}. This shows a graph which is a bit different from that of FIGURE \ref{fig:4}. Here we replace a condition on the value $m_{l_{\{e_j\}_{j=1}^{n_e}},l_2}(2)$ by $m_{l_{\{e_j\}_{j=1}^{n_e}},l_2}(2)=0$, dropping the condition on the value of Main Theorem \ref{mthm:2}.}
	\label{fig:7}
\end{figure}
\section{Closing comments on our study.}
We close our paper by giving several comments.

First, we present our additional result.
\begin{MainThm}
\label{mthm:4}
Suppose that a situation and objects as in Main Theorem \ref{mthm:2} are given. We abuse the notation there.

Then, for each map $l_{\{e_j\}_{j=1}^{n_e}}:{\mathbb{N}}_{n_e} \rightarrow {\mathbb{N}} \sqcup \{0\}$ and an arbitrary integer $m_{l_{\{e_j\}_{j=1}^{n_e}}}>m_{l_{0,2}}+2$, there exist a non-singular real algebraic manifold ${M^{\prime}}_{l_{\{e_j\}_{j=1}^{n_e}}}$ which is also a closed and connected manifold, which is the zero set of a real polynomial map, and whose dimension is $m_{l_{\{e_j\}_{j=1}^{n_e}}}$, and a smooth real algebraic map ${f^{\prime}}_{l_{\{e_j\}_{j=1}^{n_e}}}:{M^{\prime}}_{l_{\{e_j\}_{j=1}^{n_e}}} \rightarrow {\mathbb{R}}^2$ with the following properties. 
\begin{enumerate}
\item \label{mthm:4.1} Each map $f:={f^{\prime}}_{l_{\{e_j\}_{j=1}^{n_e}}}$ is  the moment-like map reconstructed from some data $(D,\{S_j\}_{j=1}^{l_1},m_{l_1,l_2},m_{l_2}):=(D_{l_{\{e_j\}_{j=1}^{n_e}}},\{S_{l_{\{e_j\}_{j=1}^{n_e}},j}\}_{j=1}^{l_1},m_{l_{\{e_j\}_{j=1}^{n_e}},l_1,l_2},m_{l_{\{e_j\}_{j=1}^{n_e}},l_2})$ by choosing each $S_{l_{\{e_j\}_{j=1}^{n_e}},j}$. The following conditions are satisfied.
\begin{enumerate}
\item The non-empty open set $D:=D_{l_{\{e_j\}_{j=1}^{n_e}}}$ is a connected proper subset of $D_0$.
The intersection $(\overline{D}-D) \bigcap S_j=(\overline{D_{l_{\{e_j\}_{j=1}^{n_e}}}}-D_{l_{\{e_j\}_{j=1}^{n_e}}}) \bigcap S_j$ is non-empty for each $1 \leq j \leq l_{1,1}$.
\item We define the two integers $l_1:=l_{0,1}+{\Sigma}_{j^{\prime}=1}^{n_e} l_{\{e_j\}_{j=1}^{n_e}}(j^{\prime})$ and $l_2:=l_{0,2}+1=1+1=2$.
\item Each $S_j$ is a circle.  For $1 \leq j \leq l_{0,1}$, $S_j:=S_{j,0}$.
\item Distinct circles in the family $\{S_{l_{0,1}+j}\}_{j=1}^{l_1-l_{0,1}}$ are disjoint. For each circle $S_{l_{0,1}+j_2}$ {\rm (}$1 \leq j_2 \leq l_1-l_{0,1}${\rm )} in this family, there exists a unique circle $S_{j_1,0}$ in the family $\{S_{j,0}\}_{j=1}^{l_{0,1}}$ such that the intersection $S_{j_1,0} \bigcap S_{l_{0,1}+j_2}$ is not empty. Furthermore, the intersection is a discrete two-point set. 
\end{enumerate}
Hereafter, let ${f^{\prime}}_{0,l_{\{e_j\}_{j=1}^{n_e}}}:={\pi}_{2,1} \circ {f^{\prime}}_{l_{\{e_j\}_{j=1}^{n_e}}}$.

\item \label{mthm:4.2} Each function ${f^{\prime}}_{0,l_{\{e_j\}_{j=1}^{n_e}}}$ is a Morse-Bott function.
\item \label{mthm:4.3} Each Reeb graph $W_{{f^{\prime}}_{0,l_{\{e_j\}_{j=1}^{n_e}}}}$ is homeomorphic to the original Reeb graph $W_{f_{0,(D_0, \{S_{0,j}\}_{j=1}^{l_{0,1}},m_{l_{0,1},l_{0,2}},m_{l_{0,2}})}}$. 
\item \label{mthm:4.4} The number of vertices of the new graph $W_{{f^{\prime}}_{0,l_{\{e_j\}_{j=1}^{n_e}}}}$ is greater than that of the original graph $W_{f_{0,(D_0, \{S_{0,j}\}_{j=1}^{l_{0,1}},m_{l_{0,1},l_{0,2}},m_{l_{0,2}})}}$ by ${\Sigma}_{j=1}^{n_e} 2l_{\{e_j\}_{j=1}^{n_e}}(j)$. The number of edges of the new graph $W_{{f^{\prime}}_{0,l_{\{e_j\}_{j=1}^{n_e}}}}$ is greater than that of the original graph $W_{f_{0,(D_0, \{S_{0,j}\}_{j=1}^{l_{0,1}},m_{l_{0,1},l_{0,2}},m_{l_{0,2}})}}$ by ${\Sigma}_{j=1}^{n_e} 2l_{\{e_j\}_{j=1}^{n_e}}(j)$.

\end{enumerate}
\end{MainThm}
\begin{MainThm}
\label{mthm:5}
Suppose that a situation and objects as in Main Theorem \ref{mthm:3} are given. We abuse the notation there.

Then for each edge $e_i$, any function $m_{e_i,l_{1,2}}:{\mathbb{N}}_{l_{1,2+1}} \rightarrow \mathbb{N}$ whose values are always positive and whose restriction to ${\mathbb{N}}_{l_{1,2}}$ is the original function $m_{l_{1,2}}$, and the integer $m_{e_i}:={\Sigma}_{j=1}^{l_{1,2}+1} (m_{e_i,l_{1,2}}(j))+2$, there exist a non-singular real algebraic manifold ${M^{\prime}}_{e_i}$ which is also a closed and connected manifold, which is the zero set of a real polynomial map, and whose dimension is $m_{e_i}$, and a smooth real algebraic map ${f^{\prime}}_{e_i}:{M^{\prime}}_{e_i} \rightarrow {\mathbb{R}}^2$ with the following properties. 
\begin{enumerate}
\item \label{mthm:5.1} The map $f:={f^{\prime}}_{e_i}$ is the moment-like map reconstructed from some data $(D,\{S_j\}_{j=1}^{l_1},m_{l_1,l_2},m_{l_2}):=(D_{e_i},\{S_{e_i,j}\}_{j=1}^{l_1},m_{e_i,l_1,l_2},m_{e_i,l_{2}})$ by choosing each $S_{e_i,j}$ suitably. The following conditions are satisfied.
\begin{enumerate}

\item The non-empty open set $D:=D_{e_i}$ is a connected proper subset of $D_{1}$. The intersection $(\overline{D}-D) \bigcap S_j=(\overline{D_{e_i}}-D_{e_i}) \bigcap S_j$ is non-empty for each $1 \leq j \leq l_{1,1}$.
\item The circles are defined by $S_j:=S_{1,j}$ for $1 \leq j \leq l_{1,1}$.
\item The integers $l_1$ and $l_2$ are defined by $l_1:=l_{1,1}+1$ and  $l_2:=l_{1,2}+1$. 
\item The set $S_{l_1} \bigcap S_{1,j}$ is a non-empty set for a unique integer $1 \leq j \leq l_{1,1}$. The non-empty set is a discrete two-point set.
\end{enumerate}

\item \label{mthm:5.2} The function ${f^{\prime}}_{0,e_i}:={\pi}_{2,1} \circ {f^{\prime}}_{e_i}$ is a Morse-Bott function.
\item \label{mthm:5.3} The Reeb graph $W_{{f^{\prime}}_{0,e_i}}$ is isomorphic to the graph obtained in the following way{\rm :} we choose the edge $e_i$ of $W_{f_{0,(D_1, \{S_{1,j}\}_{j=1}^{l_{1,1}},m_{l_{1,1},l_{1,2}},m_{l_{1,2}})}}$ and add exactly 2 vertices in the interior of the edge $e_i$.
\end{enumerate}
\end{MainThm}

We can show these two by the following arguments immediately. 

Instead of choosing small circles centered at points in the given circles $S_j$ ($1 \leq j \leq l_{1,1}$) as in our proof of Main Theorems \ref{mthm:2} and \ref{mthm:3} and FIGURE \ref{fig:2} (FIGURE \ref{fig:1}), we can choose a small segment connecting two points in a circle $S_j$ ($1 \leq j \leq l_{1,1}$) and choose a small circle $S_{j^{\prime}}$ ($j^{\prime}>l_{1,1}$) in such a way that for the two arcs obtained by dividing the new circle $S_{j^{\prime}}$, the shorter one can be chosen and sufficiently close to the segment. See FIGURE \ref{fig:8}. Remember that it is sufficient to consider one case for the local curvature of the existing circle $S_j$ ($1 \leq j \leq l_{1,1}$), thanks to the symmetry. 

Except this, we can argue as in our proof of Main Theorems \ref{mthm:4} and \ref{mthm:5}. 
This completes the proof.

\begin{figure}
	
	\includegraphics[height=75mm, width=100mm]{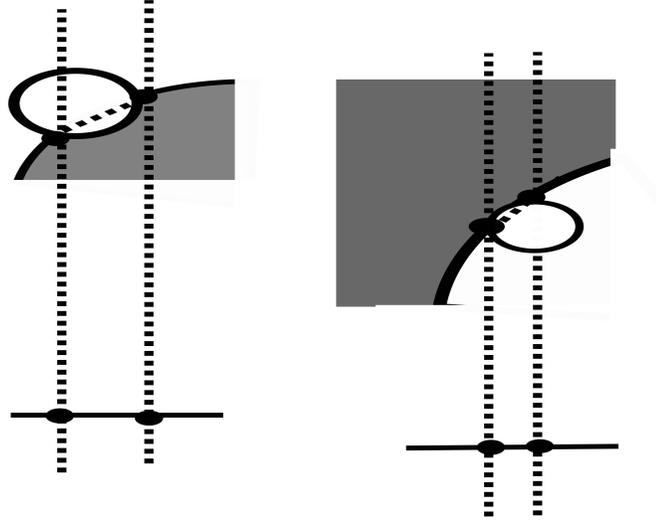}

	\caption{Local intersections of our circles. We consider one case from the four cases in FIGURE \ref{fig:1} and discuss the location and the intersection as in FIGURE \ref{fig:2}. We also show our Reeb graphs locally.}
	\label{fig:8}
\end{figure}

We present an explicit case for Main Theorem \ref{mthm:2}.

\begin{MainThm}
\label
{mthm:6}
In Main Theorem \ref{mthm:2}, let $D_0={\rm Int}\ D^2 \subset {\mathbb{R}}^2$. By applying our construction in our proof, we have the following. Note that $l_{0,1}=1$ and that $n_e=1$ for example.
\begin{enumerate}
\item In the case $l_{{e_j}_{j=1}^1}(1)={n_{e}}^{\prime} \geq 0$, we have a graph $G^{\prime}$ as follows.
\begin{enumerate}
\item The number of vertices of $G^{\prime}$ is $3{n_{e}}^{\prime}+2$.
\item The graph $G^{\prime}$ is isomorphic to a graph $G_{{\rm P},j_1,j_2}$ as follows.
\begin{enumerate}
\item Two integers $j_1$ and $j_2$ satisfy $j_1, j_2 \in \mathbb{N} \bigcup \{0\}$ and $j_1+j_2={n_e}^{\prime}$.  
\item A graph $G_{{\rm P},0,j_1,j_2}$ with exactly $2{n_e}^{\prime}+2$ vertices, labelled by the set ${\mathbb{N}}_{2{n_e}^{\prime}+2}$, and exactly $2{n_e}^{\prime}+1$ edges, the $j$-th edge of which connectes the vertices labelled by "$j$" and "$j+1$", is given.
\item The graph $G_{{\rm P},j_1,j_2}$ is obtained by adding to the previous graph $G_{{\rm P},0,j_1,j_2}$ exactly ${n_e}^{\prime}$ new vertices and exactly ${n_e}^{\prime}$ new edges. In $j_1$ vertices of these new
vertices, the ${j_1}^{\prime}$-th vertex and the vertex labelled by "$2{j_1}^{\prime}+1$" are connected by a new edge. In the remaining $j_2$ vertices of them, the ${j_2}^{\prime}$-th vertex and the vertex labelled by "$2{j_1}+2{j_2}^{\prime}$" are connected by a new edge.
\end{enumerate}
\end{enumerate}
\item Conversely, we can have a graph isomorphic to $G_{{\rm P},j_1,j_2}$ for any pair $j_1, j_2 \in \mathbb{N} \bigcup \{0\}$ satisfying $j_1+j_2={n_e}^{\prime}$ in this way.
\end{enumerate}
\end{MainThm}
\begin{proof}
We apply our construction of Main Theorem \ref{mthm:2}. We can choose 
new small ${n_{e}}^{\prime}$ circles $S_{j^{\prime}+1}$ ($1 \leq j^{\prime} \leq {n_{e}}^{\prime}$) in such a way that $j_1$ circles are centered at points of the form $(t_1,t_2)$ ($t_1<0$), and that $j_2$ circles are centered at points of the form $(t_1,t_2)$ ($t_1>0$). After that, we apply Main Theorem \ref{mthm:2} to obtain our desired graph.

This argument does not depend on the sign of $t_2$ or  whether the relation $t_2<0$ or $t_2>0$ holds. 
Remember FIGUREs \ref{fig:1} and \ref{fig:2} and related arguments.

This completes the proof all.
	

\end{proof}

We give remarks and related examples.  
\begin{Rem}
\label{rem:2}
Main Theorems \ref{mthm:2}, \ref{mthm:3}, \ref{mthm:4}, \ref{mthm:5}, and \ref{mthm:6} are regarded as our additional result to main results of \cite{kitazawa3, kitazawa7}. 
We have obtained new real algebraic functions with their Reeb graphs explicitly. 

Moreover, these studies are originally motivated by Sharko's question in \cite{sharko}. It asks whether we can have an explicit nice smooth function whose Reeb graph is isomorphic to a given graph on some closed surface or more generally, some smooth closed manifold. More precisely, these studies are also motivated by a revised question by the author. 
It asks whether we can respect singularities of the functions and preimages in addition. For related studies, see \cite{kitazawa1, kitazawa2} as pioneering studies by the author for example. \cite{saeki2} is also a related study based on our informal discussions on \cite{kitazawa1}. For the case of Morse functions such that connected components of preimages with no singular points are spheres, \cite{michalak1} is a pioneering work. \cite{michalak2} is also a related work studying fundamental deformations of Reeb graphs of Morse functions on fixed manifolds. These studies of Michalak have also motivated the author. 

We go back to exposition on our real algebraic functions and their Reeb graphs. \cite{kitazawa3} is a pioneering study related to Main Theorems \ref{mthm:2}, \ref{mthm:3}, \ref{mthm:4} and \ref{mthm:5}. They study cases where hypersurfaces $S_j$ are mutually disjoint. The study is also regarded as a study for the case $D_G$ in Remark \ref{rem:4}, presented later.

It is also remarkable that we construct smooth functions which are not real algebraic or real analytic functions in several scenes in the study. Important functions in \cite{saeki2} with \cite{kitazawa2} show explicit cases.
\end{Rem}
%

\begin{Rem}
\label{rem:3}
We may obtain revised versions of our main result under weaker conditions, for example. However, we respect simple or explicit cases. For example, we respect very explicit cases with the hypersurfaces $S_j$ being spheres of fixed radii in Main Theorem \ref{mthm:1} and show Main Theorems \ref{mthm:2}, \ref{mthm:3}, \ref{mthm:4}, \ref{mthm:5} and \ref{mthm:6}. 
\end{Rem}

Related to Remark \ref{rem:3}, we give several small examples of extensions of Main Theorem \ref{mthm:1}.

\begin{Ex}
\label{ex:1}
In Main Theorem \ref{mthm:1}, we can drop conditions as follows.
\begin{enumerate}
\item We can argue similarly in the case $\overline{D}$ may not be compact. In such a case, $M$ may not be compact.
We can argue similarly in the case $\overline{D}$ may not be connected. In such a case, $M$ may not be connected.
\item We can argue similarly in the case the hypersurface $S_j$ may not be connected and may be a union of connected components of the zero set of a polynomial $f_j$.
\item We can also argue similarly in the following case.
\begin{itemize}
\item The zero set of the polynomial $f_j$ has connected components other than (connected components of) $S_j$.
\item The connected components of the zero set of the polynomial $f_j$ except (ones of) $S_j$ are disjoint from the closure $\overline{D}$ of $D$.
\end{itemize} 
\end{enumerate}
\end{Ex}
Our Main Theorems and arguments following Main Theorem \ref{mthm:1} respect the original conditions.
\begin{Rem}
\label{rem:4}
We review some important arguments from studies \cite{kitazawa3, kitazawa9} of the author. We also respect a study \cite{bodinpopescupampusorea}, having motivated the author to present these studies. Prepare a finite and connected graph $G$ enjoying the following properties.
\begin{itemize}
\item The degree of each vertex of $G$ is $1$ or $3$.
\item The graph $G$ admits a piecewise smooth embedding $e_G:G \rightarrow {\mathbb{R}}^2$ enjoying the following properties.

\begin{itemize}
\item The function ${\pi}_{2,1} \circ e_G:G \rightarrow \mathbb{R}$ is injective at each edge $e$ of $G$. 
\item If the function ${\pi}_{2,1} \circ e_G:G \rightarrow \mathbb{R}$ has an extremum at a point $p \in G$, then $p$ is a vertex of $G$ whose degree is $1$.
\item At distinct vertices, the values of the functions ${\pi}_{2,1} \circ e_G:G \rightarrow \mathbb{R}$ are always distinct.
\end{itemize}

\end{itemize}
Then we have a natural moment-like map $f_{(D_G,\{S_{G,j}\}_{j=1}^{l_1},m_{G,l_1,l_2},m_{G,l_2})}:M_{(D_G,\{S_{G,j}\}_{j=1}^{l_1},m_{G,l_1,l_2},m_{G,l_2})} \rightarrow {\mathbb{R}}^2$ with the following properties.
\begin{enumerate}
\item We can know the existence of the data $(D_G,\{S_{G,j}\}_{j=1}^{l_1},m_{G,l_1,l_2},m_{G,l_2})$ such that the non-singular real algebraic curves $S_{G,j}$ are mutually disjoint and that the graph $e_G(G)$ is regarded as a graph enjoying the following properties. We also note that the curve $S_{G,j}$ is abused in the data as a suitably defined real polynomial $f_{G,j}$. 
The open set $D_G$ and the family $\{S_{G,j}\}_{j=1}^{l_1}$ of non-singular curves in ${\mathbb{R}}^2$ are obtained from theory \cite{bodinpopescupampusorea, sorea1}: the hypersurfaces $S_{G,j}$ are obtained by approximation of Weierstrass type respecting derivatives, for example.
\begin{itemize}
	\item The boundary $\overline{D_G}-D_G$ of the closure $\overline{D_G}$ and the disjoint union ${\sqcup}_{j=1}^{l_1} S_{G,j}$ coincide.
	\item The Reeb space of the restriction of ${\pi}_{2,1}$ to the closure $\overline{D_G}$ is homeomorphic to the graph $e_G(G)$ and the graph $e_G(G)$ is also embedded in $D_G$. In terms of \cite{bodinpopescupampusorea, sorea1}, the graph $e_G(G)$ is a {\it Poincar\'e-Reeb} graph of $D_G$.
	\item The Reeb space of the restriction of ${\pi}_{2,1}$ to the closure $\overline{D_G}$ is also regarded as a graph. The vertex set of it consists of all points containing points in the boundary $\overline{D_G}-D_G$ being also singular points of the restriction to this boundary of ${\pi}_{2,1}$. The graph $e_G(G)$ is also isomorphic to this graph.
\end{itemize} 
\item The Reeb graph $W_{f_{0,(D_G,\{S_{G,j}\}_{j=1}^{l_1},m_{G,l_1,l_2},m_{G,l_2})}}=W_{{\pi}_{2,1} \circ f_{(D_G,\{S_{G,j}\}_{j=1}^{l_1},m_{G,l_1,l_2},m_{G,l_2})}}$ of the function ${\pi}_{2,1} \circ f_{(D_G,\{S_{G,j}\}_{j=1}^{l_1},m_{G,l_1,l_2},m_{G,l_2})}= f_{0,(D_G,\{S_{G,j}\}_{j=1}^{l_1},m_{G,l_1,l_2},m_{G,l_2})}$ is isomorphic to $G$. We can have this function as a Morse-Bott function. We can show that this is a Morse-Bott function by applying (a slight generalization of) Proposition \ref{prop:1}.
As a specific case, in the case $l_2=1$ or equivalently, the case $m_{G,l_1,l_2}$ is a constant function, we can also show that this is a Morse function.
\end{enumerate}

\end{Rem}
We give an example related to Remark \ref{rem:4}.\begin{Ex}
\label{ex:2}
We first give an explicit graph $G$ and its embedding $e_G$ in Remark \ref{rem:4}. This is shown in blue in FIGURE \ref{fig:9}. FIGURE \ref{fig:9} shows the image of a moment-like map $f_{(D_{G_{\rm S}},\{S_{G_{{\rm S},j}}\}_{j=1}^{l_1},m_{G_{{\rm S},j},l_1,l_2},m_{G_{{\rm S},j},l_2})}$ into ${\mathbb{R}}^2$ reconstructed from some data $(D_{G_{\rm S}},\{S_{G_{{\rm S},j}}\}_{j=1}^{l_1},m_{G_{{\rm S}},l_1,l_2},m_{G_{{\rm S}},l_2})$. Here the region $D_{G_{\rm S}}$ is a connected open set of ${\mathbb{R}}^2$ surrounded by three circles ($l_1=3$). The Reeb graph of the function $f_{0,(D_{G_{\rm S}},\{S_{G_{{\rm S},j}}\}_{j=1}^{l_1},m_{G_{{\rm S},j},l_1,l_2},m_{G_{{\rm S},j},l_2})}$ is isomorphic to the graph $G$.

For a method to find a region $D_G$ as in Remark \ref{rem:4} whose Poincar\'e-Reeb graph is $G$ ($e_G(G)$), see the original article \cite{bodinpopescupampusorea}.

Note also that this shows an example which is not from Main Theorem \ref{mthm:2} or \ref{mthm:3}: see the four points in the center for singular points of the function $f_{0,(D,\{S_j\}_{j=1}^{l_1},m_{l_1,l_2},m_{l_2})}$ and the smallest circle containing the four points for example, and compare this case to cases of Main Theorems \ref{mthm:2} and \ref{mthm:3}. 
\end{Ex}
\begin{figure}
	
	\includegraphics[height=75mm, width=100mm]{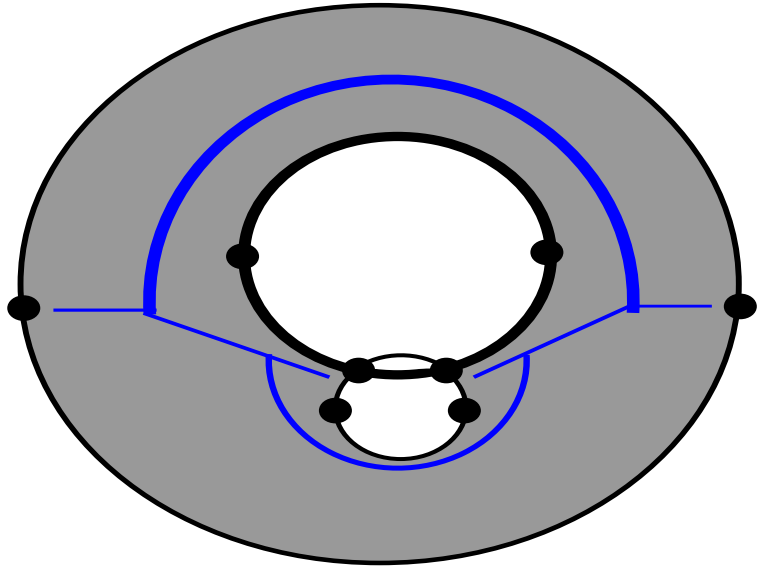}

	\caption{Example \ref{ex:2}. The image of a moment-like map $f_{(D_{G_{\rm S}},\{S_{G_{{\rm S},j}}\}_{j=1}^{l_1},m_{G_{{\rm S}},l_1,l_2},m_{G_{{\rm S}},l_2})}$ into ${\mathbb{R}}^2$ reconstructed from the data $(D_{G_{\rm S}},\{S_{G_{{\rm S},j}}\}_{j=1}^{l_1},m_{G_{{\rm S}},l_1,l_2},m_{G_{{\rm S}},l_2})$ where a region $D_{G_{\rm S}}$ is surrounded by three circles and colored in gray. The Reeb graph of the function $f_{0,(D_{G_{\rm S}},\{S_{G_{{\rm S},j}}\}_{j=1}^{l_1},m_{G_{{\rm S}},l_1,l_2},m_{G_{{\rm S}},l_2})}:={\pi}_{2,1} \circ f_{(D_{G_{\rm S}},\{S_{G_{{\rm S},j}}\}_{j=1}^{l_1},m_{G_{{\rm S}},l_1,l_2},m_{G_{{\rm S}},l_2})}$, isomorphic to the blue colored graph $G$ ($e_G(G)$). Black dots are for singular points of the function. Note that this also shows a case which is not for Main Theorem \ref{mthm:2} or Main Theorem \ref{mthm:3}. See the four dots in the center and the smallest circle containing the four points for example.}
	\label{fig:9}
\end{figure}
We present a related problem.
\begin{Prob}
Given a graph $G$ (with an embedding $e_G$) in Remark \ref{rem:4}. 
\begin{enumerate}

\item Can we define meaningful classes of  natural moment-like maps $f_G$ such that the Reeb graphs of the functions $f_{0,G}:={\pi}_{2,1} \circ  f_G$ are isomorphic to the graphs $G$ and $e_G(G)$? Can we classify such maps of certain classes we define?
\item What is the most natural moment-like map $f_G$ such that the Reeb graph of $f_{0,G}={\pi}_{2,1} \circ f_G$ is isomorphic to $G$ and $e_G(G)$? Can we define (or formulate problems of defining) such maps under certain conditions.
\end{enumerate}
\end{Prob}

\end{document}